\documentclass[12pt]{amsart}
\usepackage{pinlabel}

\usepackage{graphicx}
\usepackage{multirow}
\usepackage[english]{babel}
\usepackage{amsrefs}
\graphicspath{ {images/} }
\usepackage[T1]{fontenc}
\usepackage{txfonts}
\usepackage{times}
\usepackage[all]{xy}
\usepackage{subfig}
\usepackage{tikz}
\usetikzlibrary{shapes,arrows,shadows}
\usetikzlibrary{decorations.markings}
\usepackage{color}
\usepackage[normalem]{ulem}
\usepackage{hyperref}
\usepackage{wrapfig}
\usetikzlibrary{arrows}
\usepackage{pb-diagram}
\usepackage{verbatim}
\usepackage{float}
\usepackage{graphpap}
\usepackage{pst-all}
\usepackage{pstcol}
\usepackage{pstricks}
\usetikzlibrary{arrows}
\usetikzlibrary{calc}


\newtheorem{theorem}{Theorem}[section]
\newtheorem{lemma}{Lemma}[section]

\newtheorem{corollary}{Corollary}[section]
\newtheorem{definition}{Definition}[section]

\newtheorem{remark}{Remark}[section]

\newcommand{\Psequence}{\mathbb{R}_{+}^{\mathbb{N}_0}}
\newcommand{\mayor}{>}
\newcommand{\menor}{<}

\begin{document}

\title[ ]{Parabolicity of zero-twist tight flute surfaces and uniformization of the Loch Ness monster}

\author[ ]{John A. Arredondo, Israel Morales, Camilo Ram\'irez Maluendas}

\address{John A. Arredondo\\\newline Departamento de Matem\'aticas, Fundaci\'on Universitaria Konrad Lorenz, Bogot\'a, Colombia.}
\email{alexander.arredondo@konradlorenz.edu.co}

\address{Israel Morales\\\newline Instituto de Matem\'aticas UNAM Unidad Oaxaca, Oaxaca de Ju\'arez, Oaxaca, M\'exico.}
\email{fast.imj@gmail.com}

\address{Camilo Ram\'irez Maluendas\\\newline Departamento de Matem\'aticas y Estad\'istica, Universidad Nacional de Colombia, Sede Manizales, Manizales 170004, Colombia.}
\email{camramirezma@unal.edu.co}

\keywords{Parabolic type, Fenchel-Nielsen parameters, zero-twist tight flute surface, Loch Ness monster}

\subjclass[2000]{20H10, 57N05, 57N16, 30F20, 30F25, 30F45, 57K20}

\begin{abstract}
We study the zero-twist flute surface and we associate to each one of them a sequence of positive real numbers $\mathbf{x}=(x_{n})_{n\in\mathbb{N}_{0}}$, with a torsion-free Fuchsian group $\Gamma_{\mathbf{x}}$ such that the convex core  of $\mathbb{H}^2/\Gamma_{\mathbf{x}}$ is isometric to a zero-twist tight flute surface $S_{\mathbf{x}}$. Moreover, we prove that the Fuchsian group $\Gamma_{\mathbf{x}}$ is of the first kind if and only if the series $\sum x_{n}$ diverges. As consequence of the recent work of  Basmajian, Hakobian and {\v{S}}ari{\'c}, we obtain that the zero-twist flute surface $S_{\mathbf{x}}$ is of parabolic type if and only $\sum x_{n}$ diverges.  In addition, we present an uncountable family of hyperbolic surfaces homeomorphic to the Loch Ness Monster. More precisely, we associate to each sequence $\mathbf{y}=(y_{n})_{n\in\mathbb{Z}}$, where $y_{n}=(a_{n},b_{n},c_{n},d_{n},e_{n})\in \mathbb{R}^5$ and $a_n\leq b_n \leq c_n \leq d_n \leq e_n \leq a_{n+1}$, a Fuchsian group $G_{\mathbf{y}}$ such that $\mathbb{H}^2/G_{\mathbf{y}}$ is homeomorphic to the Loch Ness Monster. 
\end{abstract}

\maketitle


\section*{Introduction}\label{sec:introduction}

A fundamental open question in the classification of Riemann surfaces, known as the \textit{type problem} is: 

{\centerline{whether a Riemann surface S supports a Green's function?}}

\noindent 
In the context of simply connected Riemann surfaces we have that  the Poincar\'e disc $\Delta$ admits Green's functions, while the Riemann sphere $\widehat{\mathbb{C}}$ and the complex plane $\mathbb{C}$ do not. From this,  recall that a Riemann surface $S$ is \textbf{elliptic} if and only if $S$ is compact (equivalently, closed). On the other hand, an open Riemann surface $S$ is said to be \textbf{parabolic}, if $S$ does not carry a negative non-constant subharmonic function. All open Riemann surface which are not parabolic will be called \textbf{hyperbolic} (see \cite{Bear1}*{p.164}, \cite{Makoto}*{Section 2}).
It is well known that the type problem becomes equivalent  to answering the question

{\centerline{which open Riemann surfaces are of parabolic type?}}

\noindent The interest in solving this problem has captured the attention of the mathematical community for almost one hundred years. Some of the first partial results to this question, in modern language can be read in the theorems of section 6 in \cite{AhlSar}*{p. 204}. After these results, numerous known characterizations of parabolic surfaces, that are equivalent to the type problem have been produced from potential theory, theory of functions, dynamics and geometry of surfaces, among others. This is a short list of known characterizations of Riemann surfaces of parabolic type: if the Riemann surface $S$ is the quotient of the Poincar\'e unit disk $\Delta$ by a Fuchsian group $\Gamma$, then $S$ is parabolic if and only if 
\begin{itemize}
	\item[$\bullet$] The series $\sum\limits_{\gamma\in \Gamma}\left(\frac{1-\Vert \gamma(\textbf{0})\Vert}{1+\Vert \gamma(\textbf{0})\Vert}\right)$ diverges \cite{Nicho}*{Theorem 5.2.1},
	\item[$\bullet$]  The geodesic  flow on the unit tangent bundle of $S$ is ergodic \cite{Nicho}*{Theorem 8.3.4};
	\item[$\bullet$] The Mostow rigidity holds for $\Gamma$ \cite{Agard}, \cite{AstZin1990}, \cite{Tukia};
	\item[$\bullet$] The group $\Gamma$ has the Bowen's property \cite{AstZin1990}, \cite{Bishop}.
\end{itemize}

\medskip
\noindent  {\bf Parabolicity of zero-twist tight flute surfaces.} Open Riemann surfaces are classified, up to homeomorphisms, by their genus $g(S)\in \mathbb{N}_{0}\cup\{\infty\}$, and a pair of nested topological spaces ${\rm Ends}_{\infty}(S)\subseteq {\rm Ends}(S)$ homeomorphic to a pair of nested closed subsets of the Cantor space.
In this context, the \emph{flute surfaces} are the unique infinite-type Riemann surfaces, up to homeomorphism, of genus zero and with space of ends homeomorphic to the ordinal number $\omega + 1$ (\S \ref{Subsec:TopologicalSurfaces}). The classof flute surfaces is one of the simplest families where we can ask about the parabolicity problem. However, even in this class the problem of parabolicity is widely open (see \cite{BasHakSa}, \cite{MR}). A better manage family in the class of Riemann flute surfaces are the so-called \emph{tight flute surfaces}. Such surfaces are hyperbolic surfaces obtained by starting with a geodesic pair of pants $P_0$ with two punctures and one boundary geodesic and consecutively gluing geodesic pair of pants $P_n$ ($n\geq 1$) with one cusp and two boundary geodesics in an infinite chain (\S \ref{definition:tight_flute_surface}). A tight flute surface is determined by its \emph{Fenchel-Nielsen parameters} (\S \ref{Subsec:FenchelNielsen}) $(\{l_n,t_n\})_{n\in\mathbb{N}}$ where $l_n$ and $t_n$ are the length and twist parameter of the boundary closed geodesic $\alpha_n$ of the surface obtained after gluing $n$ pair of pants. This surface is denoted by $S=S(\{l_{n},t_n\})_{n\in \mathbb{N}}$. If $t_n=0$ for all $n\in \mathbb{N}$, then we say that $S=S(\{l_{n},0\})_{n\in\mathbb{N}}$ is a \emph{zero-twist flute surface}. 

In this article, we aim to present a new way of describing zero-twist flute surfaces; this description differs from the Fenchel-Nielsen parameters. Our main contribution is as follows:

\begin{theorem}\label{Teo:Parametrization-ZTFS}
 For each zero-twist tight flute surface $S=S(\{l_{n},0\})_{n\in\mathbb{N}_{0}}$,  there exists a unique sequence ${\mathbf{x}}=(x_n)_{n\in \mathbb{N}_{0}}$ of positive real numbers, which defines the Fuchsian group $\Gamma_{\mathbf{x}}$ generated by
      \begin{equation}\label{eq:maps_g}
    g_n(z):=\dfrac{\left(1+\dfrac{2s_{n-1}}{s_n-s_{n-1}}\right)z-2s_{n-1}\left(1+\dfrac{s_{n-1}}{s_n-s_{n-1}}\right)}{-\dfrac{2}{s_n-s_{n-1}}z+\left(1+\dfrac{2s_{n-1}}{s_n-s_{n-1}}\right)} \quad  \in PSL(2,\mathbb{R}),
    \end{equation}
with  $n\in\mathbb{N}_{0}$ and $s_{n}=\sum\limits_{i=0}^{n}x_{i}$,  such that the convex core of the surface $\mathbb{H}^{2}/\Gamma_{\mathbf{x}}$ is isometric to $S$. Moreover, $\Gamma_{\mathbf{x}}$ is of the first kind if and only if the series $\sum x_n$ diverges. In this case, $\mathbb{H}^{2}/\Gamma_{\mathbf{x}}$ coincides with $S$ and it is geodesically complete.    
\end{theorem}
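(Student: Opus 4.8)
The plan is to read off the entire geometry of $\Gamma_{\mathbf{x}}$ from the isometric circles of the generators and then match it, pair of pants by pair of pants, to the prescribed zero-twist tight flute. First I would record the elementary features of the maps in \eqref{eq:maps_g}. Writing $a=s_{n-1}$ and $b=s_n$ (so $b-a=x_n$), the matrix of $g_n$ normalized to determinant one is $\frac{1}{x_n}\left(\begin{smallmatrix} a+b & -2ab\\ -2 & a+b\end{smallmatrix}\right)$, whose trace $\tfrac{2(a+b)}{b-a}$ exceeds $2$; hence $g_n$ is hyperbolic with fixed points $\pm\sqrt{s_{n-1}s_n}$ and axis the semicircle $|z|=\sqrt{s_{n-1}s_n}$. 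A direct computation gives $g_n(s_{n-1})=-s_{n-1}$ and $g_n(s_n)=-s_n$, so the isometric circle of $g_n$ is the semicircle with diameter $[s_{n-1},s_n]$ and $g_n$ carries it onto the semicircle with diameter $[-s_n,-s_{n-1}]$, the isometric circle of $g_n^{-1}$ (for $n=0$ one has $s_{-1}=0$, so $g_0$ is parabolic fixing $0$). These semicircles tile the real line with consecutive tangencies at the points $s_n$, and at each such tangency the word $g_{n+1}^{-1}g_n$ fixes $s_n$, where the tangency of the two circles forces it to be parabolic. Finally, the reflection $\rho(z)=-\bar z$ in the imaginary axis satisfies $\rho g_n\rho^{-1}=g_n^{-1}$, so it normalizes $\Gamma_{\mathbf{x}}$.

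Next I would prove discreteness and extract a fundamental domain. Because the isometric circles have diameters $[s_{n-1},s_n]$ with $x_n=s_n-s_{n-1}\to 0$ as they accumulate, near the accumulation point $s_\infty=\sum x_n\in\partial\mathbb{H}^2$ the circles are small and hug the boundary; consequently the family of semicircles is locally finite inside $\mathbb{H}^2$, even though it accumulates on $\partial\mathbb{H}^2$. This is exactly what is needed to apply the Poincar\'e polygon theorem to the region $\Omega$ lying above (exterior to) all the semicircles, with side-pairings $g_n$. The only cycles occur at the boundary tangencies, each giving a parabolic cusp, so Poincar\'e's theorem yields that $\Gamma_{\mathbf{x}}$ is discrete, that $\Omega$ is a fundamental domain, and that $\Gamma_{\mathbf{x}}$ is the free group on $\{g_n\}$; the conjugacy classes of the parabolics $g_0$ and $g_{n+1}^{-1}g_n$ enumerate the cusps.

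I would then identify the quotient with the tight flute and establish the parameter dictionary. Cutting $\Omega$ along the positive imaginary axis and along the axes of the $g_n$ decomposes $\mathbb{H}^2/\Gamma_{\mathbf{x}}$ into the chain of pairs of pants $P_0,P_1,\dots$: the pants $P_0$ carries the two cusps at $0$ (from $g_0$) and $s_0$ (from $g_1^{-1}g_0$), each later $P_n$ carries the cusp at $s_n$, and the cuff $\alpha_n$ between $P_{n-1}$ and $P_n$ is the projection of the axis of $g_n$. From $\cosh(\ell/2)=\tfrac12|\mathrm{tr}|$ this gives the key dictionary
\begin{equation}\label{eq:dictionary}
\cosh\!\left(\frac{l_n}{2}\right)=\frac{s_{n-1}+s_n}{s_n-s_{n-1}},\qquad n\ge 1 .
\end{equation}
The involution induced by $\rho$ is an orientation-reversing isometry fixing each axis $\{|z|=\sqrt{s_{n-1}s_n}\}$ and interchanging the two sides of every cuff, which is precisely the characterization of vanishing twist parameters; hence the quotient carries the zero-twist structure. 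Inverting \eqref{eq:dictionary} determines each ratio $s_n/s_{n-1}$ from $l_n$, and together with the normalization fixing the overall scale this produces the unique positive sequence $\mathbf{x}$ realizing the prescribed lengths, proving existence and uniqueness and that the convex core is isometric to $S$. I expect this identification to be the main obstacle: one must show that the abstract quotient is isometric, as a marked hyperbolic surface, to the given zero-twist tight flute, i.e.\ that the decomposition coming from the axes of the $g_n$ matches the defining chain with the correct cuff lengths \eqref{eq:dictionary} and genuinely zero twists.

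Finally I would settle the first-kind criterion and its consequences. The real diameters of the isometric circles cover the interval $(-s_\infty,s_\infty)$, with the $s_n$ as the tangency points, so the free side of $\Omega$ on $\partial\mathbb{H}^2$ is the single arc $(s_\infty,+\infty)\cup\{\infty\}\cup(-\infty,-s_\infty)$. If $\sum x_n$ converges then $s_\infty<\infty$, this arc is nonempty, its $\Gamma_{\mathbf{x}}$-orbit is an open set of ordinary points, and $\Gamma_{\mathbf{x}}$ is of the second kind; the boundary geodesic $|z|=s_\infty$ of the convex hull projects to the single convex-core boundary curve, towards which the cuffs $\alpha_n$ converge, and the one funnel it bounds is the extra end of $\mathbb{H}^2/\Gamma_{\mathbf{x}}$. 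If $\sum x_n$ diverges then $s_\infty=\infty$, the diameters exhaust $\mathbb{R}$, the parabolic fixed points $s_n$ accumulate at $\infty$, the limit set is all of $\partial\mathbb{H}^2$, and $\Gamma_{\mathbf{x}}$ is of the first kind; then there is no funnel, the convex core is all of $\mathbb{H}^2/\Gamma_{\mathbf{x}}$, this surface equals $S$, and since all its ends are cusps it is geodesically complete.
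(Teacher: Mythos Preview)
Your argument is essentially the paper's, run in the opposite direction: the paper starts from the surface $S$, cuts along the orthogeodesics $\gamma_n$ between consecutive punctures to obtain an ideal polygon, embeds it symmetrically about the imaginary axis, and then reads off the vertices $s_n$ and the side-pairings $g_n$; you start from the formulas for $g_n$, identify their isometric circles as exactly those semicircles over $[s_{n-1},s_n]$ and $[-s_n,-s_{n-1}]$, invoke Poincar\'e's theorem where the paper cites Beardon, and then reassemble the pants. The reflective symmetry $\rho(z)=-\bar z$ you use to certify zero twist is the same involution $\tau$ the paper notes at the outset, and your first-kind criterion (diameters exhaust $\mathbb{R}$ iff $s_n\to\infty$) is exactly the paper's.

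Two small slips worth fixing. In the divergent case your closing sentence ``since all its ends are cusps it is geodesically complete'' is not right: the non-isolated end of the flute, which in your polygon corresponds to $\infty$, is \emph{not} a parabolic cusp (no element of $\Gamma_{\mathbf{x}}$ fixes $\infty$); completeness follows simply because $\mathbb{H}^2/\Gamma_{\mathbf{x}}$ is the quotient of a complete space by a group acting freely and properly discontinuously. In the convergent case, the projection of $|z|=s_\infty$ is a bi-infinite simple geodesic on the boundary of the convex core, not a closed curve, since no element of $\Gamma_{\mathbf{x}}$ has axis $\{\pm s_\infty\}$.
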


    Our construction above is inspired on A. Basmajian work in \cite{Bas93}, where the author present a realization theorem showing that $(\{l_n, t_n\})_{n\in\mathbb{N}}$ are the Fenchel-Nielsen coordinates of a tight flute surface if and only if $r_n\mayor \ln 2$ for all $n$ odd, and $r_n\menor \ln 2$ for all $n$ even, where $r_n:=\sum_{i=1}^n(-1)^{i+1}l_i$, see \cite{Bas93}*{Theorem 2}. Observe that these conditions does not depend on twist parameters. In contrast with our construction of a zero-twist tight flute surface in terms of the sequence $\mathbf{x}=(x_n)_{n\in \mathbb{N}_0} \in \Psequence$, we have that it does not depend of any additional condition on the sequence. Additionally, we can recover the Fenchel-Nielsen parameters of $S=S(\{l_n,0\})_{n\in\mathbb{N}_{0}}$ using 
    
    \begin{equation}\label{Eq:LengthParameterZTFS}
    l_n=\ln\left(\frac{s_{n-1}+s_{n}+2\sqrt{s_{n-1}s_{n}}}{s_{n-1}+s_{n}-2\sqrt{s_{n-1}s_{n}}} \right).
    \end{equation}

Let $\Psequence$ be the space of all sequences of positive real numbers indexed by the set $\mathbb{N}_{0}$ endowed with the compact-open topology, and denote by $\mathcal{F}$ the set of all zero-twist flute surfaces, that is, $\mathcal{F}:=\{S=S(\{l_{n},0\})_{n\in\mathbb{N}_{0}}\}$. Accordingly to Theorem \ref{Teo:Parametrization-ZTFS}, there is a bijective map $\sigma: \Psequence \rightarrow \mathcal{F}$, which associates to each sequence in $\Psequence$ a zero-twist flute surface. We denote by $S_{\mathbf{x}}$ the image of $\mathbf{x}\in \Psequence$ under $\sigma$.\\

Recently, A. Basmajian, H. Hakobyan and D. {\v{S}}ari{\'c} in \cite{BasHakSa}*{Theorems 1.1 and 1.2} give sufficient conditions on Fenchel-Nielsen parameters of a surface to determine parabolicity. In the particular case of zero-twist flute surfaces, they were able to characterize parabolicty.

\begin{theorem}[\cite{BasHakSa}*{Theorem 1.5}]\label{Teo:zero_twist_tight_flute}
	A zero-twist tight flute surface $S(\{l_{n}, 0\})_{n\in\mathbb{N}}$ is of parabolic type if and only if one of the following holds:
	\begin{enumerate}
		\item The surface $S(\{l_{n}, 0\})_{n\in\mathbb{N}}$ is complete.
		\item The series $\sum\limits_{n=1}^{\infty}e^{-\frac{l_{n}}{2}}=\infty$.
	\end{enumerate}
\end{theorem}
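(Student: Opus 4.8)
The plan is to funnel both alternatives through the parametrization of Theorem~\ref{Teo:Parametrization-ZTFS} and reduce the statement to a single condition on the sequence $\mathbf{x}=(x_n)$. Writing $s_n=\sum_{i=0}^n x_i$ and factoring $s_{n-1}+s_n\pm 2\sqrt{s_{n-1}s_n}=(\sqrt{s_n}\pm\sqrt{s_{n-1}})^2$ in \eqref{Eq:LengthParameterZTFS}, I would first record the clean identity
\begin{equation*}
e^{-l_n/2}=\frac{\sqrt{s_n}-\sqrt{s_{n-1}}}{\sqrt{s_n}+\sqrt{s_{n-1}}}.
\end{equation*}
From this, if $\sum x_n<\infty$ then $s_n\uparrow s_\infty<\infty$ and, bounding the denominator below by $2\sqrt{s_0}$, the series $\sum e^{-l_n/2}$ telescopes and converges; if $\sum x_n=\infty$ then $\sqrt{s_n}\to\infty$ and a dyadic-block estimate (on each block where $\sqrt{s_n}$ at least doubles, the partial sum exceeds a fixed positive constant) forces $\sum e^{-l_n/2}=\infty$. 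Hence condition~(2) is equivalent to $\sum x_n=\infty$, which by Theorem~\ref{Teo:Parametrization-ZTFS} means exactly that $\Gamma_{\mathbf{x}}$ is of the first kind and $S$ is complete, i.e. condition~(1). So it suffices to prove that $S$ is of parabolic type if and only if $\sum x_n=\infty$.

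The easy direction is the non-parabolic one. If $\sum x_n<\infty$ then $\Gamma_{\mathbf{x}}$ is of the second kind, so $\mathbb{H}^2/\Gamma_{\mathbf{x}}$ has a funnel and, equivalently, $S$ is the interior of the convex core and carries an incomplete limit end, a free boundary lying at finite distance. Such a boundary has positive capacity (positive harmonic measure), so $S$ admits a Green's function and is of hyperbolic type; thus parabolicity fails whenever $\sum x_n<\infty$.

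The geometric heart is the converse: if $\sum x_n=\infty$ (so $\Gamma_{\mathbf{x}}$ is of the first kind and $S=\mathbb{H}^2/\Gamma_{\mathbf{x}}$ is complete), then $S$ is parabolic. Here I would use the extremal-length characterization of type: $S$ is parabolic precisely when the family of curves joining a fixed compact core to the limit end has infinite extremal length. Decomposing $S$ along the boundary geodesics $\alpha_n$, every escaping curve must cross each $\alpha_n$, so the extremal length to the limit end is comparable to $\sum_n m_n$, where $m_n$ is the modulus of the connecting sub-annulus of the pants $P_n$ between $\alpha_n$ and $\alpha_{n+1}$. The zero-twist hypothesis makes this gluing symmetric and lets one prove the two-sided estimate $m_n\asymp e^{-l_n/2}$; granting it, parabolicity follows from $\sum e^{-l_n/2}=\infty$. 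Alternatively, and more in the spirit of this paper, I would verify divergence type directly via the Poincar\'e-series criterion \cite{Nicho}*{Theorem 5.2.1}, estimating the orbit $\Gamma_{\mathbf{x}}(\mathbf{0})$ from the explicit matrices \eqref{eq:maps_g} and using $\sum x_n=\infty$ to force $\sum_{\gamma}\frac{1-\Vert\gamma(\mathbf{0})\Vert}{1+\Vert\gamma(\mathbf{0})\Vert}=\infty$.

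The main obstacle is the sharp modulus estimate $m_n\asymp e^{-l_n/2}$ (respectively the matching orbit estimate). The upper bound, needed to detect non-parabolicity in the general Basmajian--Hakobyan--{\v{S}}ari{\'c} framework \cite{BasHakSa}*{Theorems 1.1 and 1.2}, requires an explicit extremal metric on the escaping family whose mass stays controlled when $\sum e^{-l_n/2}<\infty$; the lower bound, needed for parabolicity, must account for the cusp of each $P_n$, which competes with the connecting annulus for area. Securing both bounds with comparison constants uniform in $n$—so that they do not degenerate as $l_n\to\infty$ or $l_n\to 0$—is the technical core, after which the equivalence of (1), (2) and parabolicity follows from the elementary series analysis of the first paragraph.
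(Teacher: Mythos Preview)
The paper does not prove this theorem at all: it is quoted verbatim from \cite{BasHakSa} and used as a black box. What the paper \emph{does} do is exactly your first paragraph---it uses \eqref{Eq:LengthParameterZTFS} and Theorem~\ref{Teo:Parametrization-ZTFS} to observe that conditions (1) and (2) are both equivalent to $\sum x_n=\infty$, and then combines this with the cited result to obtain Corollary~\ref{Cor:ParabolicZTFS}. So your reduction of (1) and (2) to the single condition $\sum x_n=\infty$ matches the paper's contribution precisely; the direction ``$\sum x_n=\infty\Rightarrow\sum e^{-l_n/2}=\infty$'' can be made slightly cleaner than your dyadic-block sketch by noting $\prod_{n\le N}\tfrac{\sqrt{s_{n-1}}}{\sqrt{s_n}}=\sqrt{s_0/s_N}\to 0$, hence $\sum\bigl(1-\tfrac{\sqrt{s_{n-1}}}{\sqrt{s_n}}\bigr)=\infty$, which dominates $\tfrac12\sum e^{-l_n/2}$.

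Where your proposal departs from the paper is in attempting to \emph{prove} the implication ``$\sum x_n=\infty\Rightarrow S$ parabolic'' rather than citing it. This is a genuine gap, not a different route: you name the obstacle yourself. The modulus estimate $m_n\asymp e^{-l_n/2}$ is not established (and as stated is suspect---the connecting annulus in $P_n$ sees both $l_n$ and $l_{n+1}$, and the cusp; the two-sided bound with constants independent of $n$ is exactly the content of the \cite{BasHakSa} analysis you would be reproving). Your alternative via the Poincar\'e series criterion is likewise only a plan: knowing the generators \eqref{eq:maps_g} does not by itself control the full orbit sum $\sum_{\gamma\in\Gamma_{\mathbf{x}}}(1-\Vert\gamma(\mathbf 0)\Vert)/(1+\Vert\gamma(\mathbf 0)\Vert)$ over all words. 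So your write-up correctly recovers what the present paper proves, but the part that lies in \cite{BasHakSa} remains unproved in your proposal.
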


Using equation (\ref{Eq:LengthParameterZTFS}) and Theorem \ref{Teo:Parametrization-ZTFS}, we have that items (1) and (2) in Theorem \ref{Teo:zero_twist_tight_flute} above are equivalent. Thus we obtain the following: 

\begin{corollary}\label{Cor:ParabolicZTFS}
    A zero-twist flute surface $S_{\mathbf{x}}$ is of parabolic type if and only if $\sum x_n$ diverges.
\end{corollary}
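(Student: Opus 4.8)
The plan is to derive the corollary from Theorem \ref{Teo:zero_twist_tight_flute} by translating each of its two parabolicity criteria into a single condition on the sequence $\mathbf{x}$, the bridge being the length formula (\ref{Eq:LengthParameterZTFS}). First I would rewrite that formula transparently. Since $x_n>0$ forces $s_{n-1}<s_n$, setting $u_n:=\sqrt{s_n}$ and recognizing the numerator and denominator of (\ref{Eq:LengthParameterZTFS}) as the perfect squares $(\sqrt{s_n}+\sqrt{s_{n-1}})^2$ and $(\sqrt{s_n}-\sqrt{s_{n-1}})^2$, one gets $l_n=2\ln\left(\frac{u_n+u_{n-1}}{u_n-u_{n-1}}\right)$, and hence the key identity
\begin{equation*}
e^{-l_n/2}=\frac{u_n-u_{n-1}}{u_n+u_{n-1}}=\frac{x_n}{\left(u_n+u_{n-1}\right)^2}.
\end{equation*}
Everything afterwards is comparison of series against $\sum x_n$ through this identity.

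For the implication $\sum x_n=\infty\Rightarrow S_{\mathbf{x}}$ parabolic, I would simply invoke Theorem \ref{Teo:Parametrization-ZTFS}: divergence of the series makes $\Gamma_{\mathbf{x}}$ of the first kind, so $\mathbb{H}^2/\Gamma_{\mathbf{x}}$ coincides with $S_{\mathbf{x}}$ and is geodesically complete; thus criterion (1) of Theorem \ref{Teo:zero_twist_tight_flute} holds and $S_{\mathbf{x}}$ is of parabolic type. For the converse I argue contrapositively, assuming $\sum x_n<\infty$ and showing that both criteria fail. Criterion (1) fails because, by Theorem \ref{Teo:Parametrization-ZTFS}, convergence puts $\Gamma_{\mathbf{x}}$ in the second kind, so its convex core $S_{\mathbf{x}}$ carries nonempty geodesic boundary and fails to be (geodesically) complete. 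Criterion (2) fails by the key identity: convergence gives $s_n\to s$ with $0<s<\infty$ (as $s\ge x_0>0$), whence $(u_n+u_{n-1})^2\to 4s$ is bounded away from $0$ and $\infty$, so $e^{-l_n/2}$ is comparable to $x_n$ and $\sum e^{-l_n/2}<\infty$. Since both criteria fail, $S_{\mathbf{x}}$ is not parabolic, which closes the equivalence.

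For completeness, and to justify the remark that criteria (1) and (2) of Theorem \ref{Teo:zero_twist_tight_flute} are themselves equivalent, I would also check that $\sum x_n=\infty$ forces $\sum e^{-l_n/2}=\infty$. Writing $r_n:=u_{n-1}/u_n\in(0,1)$, the key identity yields $e^{-l_n/2}\ge\frac12\left(1-r_n\right)$, while $\prod_{n=1}^{N}r_n=u_0/u_N\to 0$ because $s_n\to\infty$. The standard infinite-product criterion then gives $\sum\left(1-r_n\right)=\infty$, hence $\sum e^{-l_n/2}=\infty$.

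The only genuinely delicate point is identifying second-kind-ness of $\Gamma_{\mathbf{x}}$ with incompleteness of the convex core $S_{\mathbf{x}}$, i.e.\ the failure of criterion (1) in the convergent case; every other step is routine manipulation of the identity above. I expect this point to be settled directly by the construction behind Theorem \ref{Teo:Parametrization-ZTFS}, whose first-kind statement already bundles the coincidence $\mathbb{H}^2/\Gamma_{\mathbf{x}}=S_{\mathbf{x}}$ with geodesic completeness, so that its contrapositive supplies precisely the incompleteness needed.
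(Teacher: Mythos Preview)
Your proposal is correct and follows essentially the same route as the paper: the paper's argument is the one-line remark that equation~(\ref{Eq:LengthParameterZTFS}) together with Theorem~\ref{Teo:Parametrization-ZTFS} makes criteria (1) and (2) of Theorem~\ref{Teo:zero_twist_tight_flute} equivalent (each being equivalent to divergence of $\sum x_n$), and you have simply unpacked this claim with the explicit identity $e^{-l_n/2}=x_n/(u_n+u_{n-1})^2$ and the comparison and infinite-product arguments. The point you flag as delicate---incompleteness of $S_{\mathbf{x}}$ when $\sum x_n<\infty$---is indeed established in the paper's proof of Theorem~\ref{Teo:Parametrization-ZTFS}, where it is observed that for finite $a=\lim s_n$ the projection of the boundary half-circle $\gamma$ consists of limit points not in the surface.
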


Theorem \ref{Teo:Parametrization-ZTFS} also gives an explicit set of generators for $\Gamma_{\mathbf{x}}$. We observe that if $\mathbf{x} \in \{1,2\}^{\mathbb{N}_0}$ then $\Gamma_{\mathbf{x}}$ is a subgroup of ${\rm PSL}(2,\mathbb{Z})$. This proves

\begin{corollary}
	The set of all zero-twist tight flute surfaces uniformized by subgroups of ${\rm PSL}(2,\mathbb{Z})$ can be identified to a homeomorphic copy of the Cantor set. 
\end{corollary}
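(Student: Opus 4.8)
The plan is to work entirely on the parameter side through the bijection $\sigma:\Psequence\to\mathcal{F}$ supplied by Theorem \ref{Teo:Parametrization-ZTFS}, and to pin down the subset of $\Psequence$ corresponding to those surfaces whose uniformizing group lies in $\mathrm{PSL}(2,\mathbb{Z})$. Since $\mathrm{PSL}(2,\mathbb{Z})$ is a group, one has $\Gamma_{\mathbf{x}}\leq \mathrm{PSL}(2,\mathbb{Z})$ if and only if each generator $g_n$ lies in $\mathrm{PSL}(2,\mathbb{Z})$; and because the matrix $M_n$ attached to $g_n$ by (\ref{eq:maps_g}) already has determinant $1$, this happens exactly when all four entries of $M_n$ are integers. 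Writing $x_n=s_n-s_{n-1}$, the independent integrality conditions are
\begin{equation*}
\frac{2}{x_n}\in\mathbb{Z},\qquad \frac{2s_{n-1}}{x_n}\in\mathbb{Z},\qquad 2s_{n-1}\!\left(1+\frac{s_{n-1}}{x_n}\right)\in\mathbb{Z}
\end{equation*}
for every $n$. I therefore set $C:=\{\mathbf{x}\in\Psequence:\ \Gamma_{\mathbf{x}}\leq \mathrm{PSL}(2,\mathbb{Z})\}$ and reduce the corollary to two assertions: that $C=\{1,2\}^{\mathbb{N}_0}$, and that $\{1,2\}^{\mathbb{N}_0}$ is homeomorphic to the Cantor set.

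The inclusion $\{1,2\}^{\mathbb{N}_0}\subseteq C$ is precisely the remark preceding the statement: if every $x_i\in\{1,2\}$ then each partial sum $s_n$ is a positive integer and the three conditions above hold. For the topological half I would observe that, since the index set $\mathbb{N}_0$ is discrete, the compact--open topology on $\Psequence$ is the product topology, so the subspace topology on $\{1,2\}^{\mathbb{N}_0}$ is the product topology of the two--point discrete space. This space is compact (Tychonoff), metrizable, totally disconnected and perfect, hence homeomorphic to the Cantor set by Brouwer's characterization; equivalently, the coordinatewise assignment $1\mapsto 0,\ 2\mapsto 1$ is an explicit homeomorphism onto $\{0,1\}^{\mathbb{N}_0}$. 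Transporting through $\sigma$ then identifies $\sigma(\{1,2\}^{\mathbb{N}_0})$ with a copy of the Cantor set.

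The hard part is the reverse inclusion $C\subseteq\{1,2\}^{\mathbb{N}_0}$, which is what justifies the word ``all'' in the statement. I would attempt an induction on $n$ establishing simultaneously that $x_n\in\{1,2\}$ and that $s_n$ is a positive integer. The difficulty is that the first condition $2/x_n\in\mathbb{Z}$ alone only forces $x_n\in\{2/k:k\in\mathbb{N}\}$ -- already at $n=0$, where $s_{-1}=0$, this is the only constraint and it admits many fractional values -- so all the leverage must come from coupling it with the remaining two conditions and with the integrality of $s_{n-1}$ carried over from the previous step. The delicate point is to control how the denominator of $s_{n-1}$ propagates through $2s_{n-1}/x_n$ and $2s_{n-1}^2/x_n$: a priori, fractional values of several $x_j$ could conspire across indices so as to keep every entry integral while clearing denominators only at a later stage. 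Ruling out such conspiracies and pinning each $x_n$ to $\{1,2\}$ is the number--theoretic heart of the argument and where I expect the real work to reside; if these conspiracies cannot be excluded, the statement must instead be read as the embedded Cantor copy $\sigma(\{1,2\}^{\mathbb{N}_0})$ produced in the previous paragraph.

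Finally, once $C=\{1,2\}^{\mathbb{N}_0}$ has been secured, the corollary follows: $\sigma$ restricts to a bijection from the Cantor set $\{1,2\}^{\mathbb{N}_0}$ onto the set of zero-twist tight flute surfaces uniformized by subgroups of $\mathrm{PSL}(2,\mathbb{Z})$, and since $\{1,2\}^{\mathbb{N}_0}$ is compact this bijection is a homeomorphism once $\mathcal{F}$ is given the topology transported by $\sigma$ (equivalently, the Fenchel--Nielsen topology matched to it through (\ref{Eq:LengthParameterZTFS})).
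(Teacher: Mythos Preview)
Your first two paragraphs already reproduce the paper's entire argument. The paper proves the corollary in one sentence: ``if $\mathbf{x}\in\{1,2\}^{\mathbb{N}_0}$ then $\Gamma_{\mathbf{x}}$ is a subgroup of $\mathrm{PSL}(2,\mathbb{Z})$,'' and then relies tacitly on the fact that $\{1,2\}^{\mathbb{N}_0}$ with the product topology is a Cantor set. No reverse inclusion is claimed or proved.

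Your third paragraph, however, chases a statement that is actually \emph{false}. The set $C=\{\mathbf{x}\in\Psequence:\Gamma_{\mathbf{x}}\le\mathrm{PSL}(2,\mathbb{Z})\}$ is strictly larger than $\{1,2\}^{\mathbb{N}_0}$. A concrete counterexample is
\[
\mathbf{x}=\Bigl(\tfrac{2}{3},\ \tfrac{1}{3},\ 1,\ 1,\ 1,\ \ldots\Bigr),\qquad\text{so}\quad s_0=\tfrac{2}{3},\ s_1=1,\ s_n=n-1\ (n\ge 1).
\]
Plugging into (\ref{eq:maps_g}) one finds the matrices
\[
g_0\sim\begin{pmatrix}1&0\\-3&1\end{pmatrix},\quad
g_1\sim\begin{pmatrix}5&-4\\-6&5\end{pmatrix},\quad
g_n\sim\begin{pmatrix}2n-1&-2n(n-1)\\-2&2n-1\end{pmatrix}\ (n\ge 2),
\]
all in $\mathrm{PSL}(2,\mathbb{Z})$. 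This $\mathbf{x}$ is not in $\{1,2\}^{\mathbb{N}_0}$, and it is not even a positive scalar multiple of such a sequence (comparing $x_0/x_1=2$ with $x_1/x_2=1/3$ rules that out). Via (\ref{Eq:LengthParameterZTFS}) its length spectrum is not realized by any sequence in $\{1,2\}^{\mathbb{N}_0}$ either, so the corresponding surface is genuinely new. The ``conspiracies'' you worried about do occur, and the induction you sketch cannot be completed: already at $n=0$ the only constraint is $2/x_0\in\mathbb{Z}$.

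In short, your alternative reading is the correct one and is exactly what the paper intends: the corollary exhibits a Cantor set \emph{inside} the family of zero-twist tight flute surfaces uniformized by subgroups of $\mathrm{PSL}(2,\mathbb{Z})$, namely $\sigma(\{1,2\}^{\mathbb{N}_0})$; it does not assert that this is the whole family. You should drop the attempted reverse inclusion and keep the argument of your first two paragraphs, which matches the paper.
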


\medskip

\noindent {\bf An uncountable family of hyperbolic Loch Ness monsters}. Recall that the
Loch Ness monster surface is the unique surface (up to homeomorphism) which has infinite genus and only one end (\S \ref{Subsec:TopologicalSurfaces}). In \cite{ALCA} the authors introduced a Fuchsian group uniformizing the Loch Ness monster . We generalize their construction and find an uncountable family of hyperbolic surfaces such that each one of these surface is homeomorphic to the Loch Ness Monster.

Denote by $\mathcal{N}$ the set of all sequences $\mathbf{y}:=(y_n)_{n\in \mathbb{Z}}$ of elements $y_n=(a_{n},b_{n},c_{n},d_{n},e_{n}) \in \mathbb{R}^5$ satisfying 

\begin{equation}\label{Eq:Condition}
    a_{n}<b_{n}<c_{n}<d_{n}<e_{n} \mbox{ and } e_{n}\leq a_{n+1}.
\end{equation}

Let $\mathbf{y}:=(y_n)_{n\in \mathbb{Z}} \in \mathcal{N}$. For each $n\in \mathbb{Z}$, let $f_{n}$ and $g_{n}$ be elements of ${\rm PSL}(2,\mathbb{R})$ mapping $\sigma_{n}$ onto $\tilde{\sigma}_{n}$ and $\rho_{n}$ onto $\tilde{\rho}_{n}$, respectively, where $\sigma_{n}$, $\rho_{n}$, $\tilde{\sigma}_{n}$ and $\tilde{\rho}_{n}$ are the half-circles in the hyperbolic plane $\mathbb{H}^2$ depicted in Figure \ref{Fig:half_circleIntro}. 

\begin{figure}[h!]
		\begin{center}	
		\begin{tikzpicture}[baseline=(current bounding box.north)]
		\begin{scope}[scale=0.8]
			\clip (-3.3,-0.5) rectangle (5.3,3.5);
			\draw [blue, line width=1pt] (-1,0) arc(0:180:1);
			\draw [red, line width=1pt] (0,0) arc(0:180:0.5);
			\draw [blue, line width=1pt] (3,0) arc(0:180:1.5);
			\draw [red, line width=1pt] (5,0) arc(0:180:1);
			\draw[dashed, color=red!60, thick, <-] (3.6,1) arc (0:180:2);
			\draw[dashed, color=blue!60, thick, <-] (0.6,1.3) arc (0:180:1.3);
			\draw [dashed, line width=1pt, black!30](-3,0) -- (-3,3);
			\draw [dashed, line width=1pt, black!30](5,0) -- (5,3);
			\node at (-3,-0.3) {\tiny{$a_{n}$}};
			\node at (-1,-0.3) {\tiny{$b_{n}$}};
			\node at (0,-0.3) {\tiny{$c_{n}$}};
			\node at (3,-0.3) {\tiny{$d_{n}$}};
			\node at (5,-0.3) {\tiny{$e_{n}$}};
			\node at (-2.2,1.2) {\small{{\color{blue}$\sigma_{n}$}}};
			\node at (-0.5,0.7) {\small{{\color{red}$\rho_{n}$}}};
			\node at (1.5,1.8) {\small{{\color{blue}$\tilde{\sigma}_{n}$}}};
			\node at (4,1.3) {\small{{\color{red}$\tilde{\rho}_{n}$}}};
			\node at (-0.7,2.9) {\small{$f_{n}$}};
			\node at (1.5,3.2) {\small{$g_{n}$}};
			\draw [->, >=latex, black!30](-3.3,0) -- (5.3,0);
			\end{scope}
			\end{tikzpicture} 
			\caption{\emph{Half-circles $\sigma_{n}$, $\rho_{n}$, $\tilde{\sigma}_{n}$  and $\tilde{\rho}_{n}$.}}
	\label{Fig:half_circleIntro}
	\end{center}	
\end{figure}
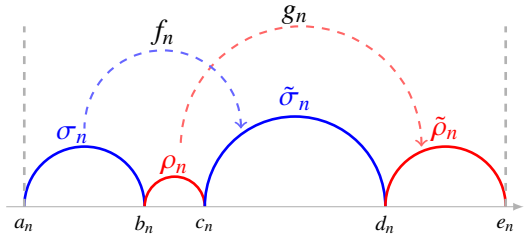

Define 
\begin{equation}
    G_\mathbf{y}:= \langle f_n,g_n:\, n\in \mathbb{Z} \rangle \leq {\rm PSL}(2,\mathbb{R}).
\end{equation}

The result we establish is the following:

\begin{theorem}\label{t:LNM}
	For each $\mathbf{y}\in \mathcal{N}$, the group $G_{\mathbf{y}}$ is Fuchsian and such that $S_\mathbf{y}:=\mathbb{H}^{2}/G_{\mathbf{y}}$ is topologically equivalent to the Loch Ness Monster. Moreover, $G_\mathbf{y}$ is of the first kind if and only if $e_n=a_{n+1}$ for all $n\in \mathbb{Z}$, $\lim\limits_{n\to \infty} e_n=\infty$ and $\lim\limits_{n\to -\infty} a_n=-\infty$.
\end{theorem}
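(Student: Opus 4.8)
The plan is to exhibit the common exterior $R:=\mathbb{H}^{2}\setminus\bigcup_{n\in\mathbb{Z}}(B_{\sigma_n}\cup B_{\rho_n}\cup B_{\tilde\sigma_n}\cup B_{\tilde\rho_n})$ of all the open half-disks $B_{\bullet}$ bounded by the half-circles of Figure \ref{Fig:half_circleIntro} as a fundamental domain for $G_{\mathbf{y}}$, and to read off both discreteness and the topology of $\mathbb{H}^{2}/G_{\mathbf{y}}$ from the resulting side-pairing. First I would note that condition \eqref{Eq:Condition} makes the half-disks $B_{\sigma_n},B_{\rho_n},B_{\tilde\sigma_n},B_{\tilde\rho_n}$ ($n\in\mathbb{Z}$) have pairwise disjoint interiors, meeting at most tangentially at the shared endpoints $b_n,c_n,d_n$ and, when $e_n=a_{n+1}$, at $e_n=a_{n+1}$. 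Since by construction $f_n$ carries the exterior of $\sigma_n$ onto $B_{\tilde\sigma_n}$ and $g_n$ carries the exterior of $\rho_n$ onto $B_{\tilde\rho_n}$ (with the symmetric statements for $f_n^{-1},g_n^{-1}$), a ping-pong argument shows that $G_{\mathbf{y}}$ is discrete and free on $\{f_n,g_n\}_{n\in\mathbb{Z}}$ with fundamental domain $R$; equivalently, one may verify the hypotheses of Poincar\'e's polygon theorem for the infinitely-sided polygon $R$. As the generators lie in $\mathrm{PSL}(2,\mathbb{R})$, the group is Fuchsian and $\mathbb{H}^{2}/G_{\mathbf{y}}$ is an orientable surface.

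Next I would determine the topology of the identification space $R/\!\!\sim$. Along $\partial\mathbb{H}^{2}$ the four sides of the $n$-th block occur in the interleaved order $\sigma_n,\rho_n,\tilde\sigma_n,\tilde\rho_n$ and are paired $\sigma_n\leftrightarrow\tilde\sigma_n$, $\rho_n\leftrightarrow\tilde\rho_n$, i.e.\ in the $1\text{--}3,\,2\text{--}4$ pattern that glues a one-holed torus; hence each block contributes exactly one handle and $\mathbb{H}^{2}/G_{\mathbf{y}}$ has infinite genus. To count ends I would follow the cycle of ideal vertices and free boundary arcs of $R$ under the side-pairings. Computing the endpoint maps induced by $f_n$ and $g_n$ on $\{a_n,b_n,c_n,d_n,e_n\}$, the cycle reads schematically $\cdots\to c_n\to b_n\to e_n\to d_{n+1}\to c_{n+1}\to\cdots$, so it propagates monotonically along the block-chain and never closes: there is a single bi-infinite cycle sweeping through every ideal vertex (and, when present, every free boundary arc). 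In particular no finite---hence no parabolic---vertex cycle occurs, so $\mathbb{H}^{2}/G_{\mathbf{y}}$ has no cusps and exactly one end; and since every neighborhood of that end meets all but finitely many blocks, the end is of infinite genus. By the classification of infinite-type surfaces, an orientable surface of infinite genus with a single end is homeomorphic to the Loch Ness Monster, as claimed.

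Finally, for the first-kind characterization I would use that $G_{\mathbf{y}}$ is of the first kind if and only if its limit set $\Lambda(G_{\mathbf{y}})$ is all of $\partial\mathbb{H}^{2}$, equivalently the domain of discontinuity $\Omega=\partial\mathbb{H}^{2}\setminus\Lambda(G_{\mathbf{y}})$ is empty. Since $\Omega$ is exactly the $G_{\mathbf{y}}$-orbit of the free boundary arcs of $R$, it is empty precisely when $R$ has no free boundary arc: the absence of inter-block gaps forces $e_n=a_{n+1}$ for all $n$, and the absence of a free arc around the ideal point $\infty$ forces $\lim_{n\to\infty}e_n=\infty$ and $\lim_{n\to-\infty}a_n=-\infty$. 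Conversely, under these three conditions the feet of the half-circles accumulate on all of $\partial\mathbb{H}^{2}$, so $\Lambda(G_{\mathbf{y}})=\partial\mathbb{H}^{2}$, the action is of the first kind, and $\mathbb{H}^{2}/G_{\mathbf{y}}$ is complete.

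I expect the main obstacle to be the exactly-one-end count: making rigorous that the ideal-vertex/free-arc cycle is a single non-closing bi-infinite chain, uniformly whether $G_{\mathbf{y}}$ is of the first or second kind, so that neither a parabolic cusp nor an extra end is produced. This rests on a careful bookkeeping of the endpoint identifications induced by $f_n$ and $g_n$ on $\{a_n,b_n,c_n,d_n,e_n\}$, together with a check that the genus indeed accumulates only at this single end.
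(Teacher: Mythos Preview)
Your overall strategy matches the paper's: exhibit the common exterior of the half-circles as a fundamental domain, deduce discreteness from the Schottky/ping-pong configuration, read off a handle from each $n$-block via the interleaved $1$--$3$, $2$--$4$ pairing, and characterize first kind by the absence of free boundary arcs in $R$. On those points your argument and the paper's are essentially the same (the paper phrases discreteness via Beardon's criterion for groups generated by hyperbolic elements rather than ping-pong, but this is cosmetic).

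The genuine methodological difference is the one-end count. The paper does \emph{not} argue via ideal-vertex cycles; it invokes Specker's characterization (Lemma~\ref{lemma:spec}): given a compact $K\subset S_{\mathbf{y}}$, lift it to $\tilde K\subset\overline{D(G_{\mathbf{y}})}$, enclose $\tilde K$ in a Euclidean rectangle $I_x\times I_y$, set $K'=\pi\bigl(\overline{D(G_{\mathbf{y}})}\cap(I_x\times I_y)\bigr)$, and observe that $S_{\mathbf{y}}\setminus K'$ is connected (its preimage in $\overline{D(G_{\mathbf{y}})}$ is connected after the side identifications because the unbounded ``top'' of the fundamental domain links the two lateral pieces). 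This is short and works uniformly for first and second kind. Your cycle approach is also correct in spirit, but note that the honest vertex cycle is
\[
\cdots\to a_n\to d_n\to c_n\to b_n\to e_n\;(\text{free arc})\;a_{n+1}\to d_{n+1}\to\cdots,
\]
since $f_n(a_n)=d_n$, $g_n^{-1}(d_n)=c_n$, $f_n^{-1}(c_n)=b_n$, $g_n(b_n)=e_n$; your displayed chain omits $a_n$ and $d_n$ within block $n$. More substantively, the inference ``single bi-infinite ideal cycle $\Rightarrow$ one end'' is not a black-box theorem for infinitely-sided polygons; you would still need an argument that no end is created ``at infinity'' inside the domain itself, which in effect brings you back to a compact-exhaustion statement. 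The paper's route via Lemma~\ref{lemma:spec} sidesteps this and is the cleaner way to close the gap you identified.
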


\noindent {\bf Organization of the paper}. In Section \ref{sec:preliminaries}  we compile some classic results and concepts of the theory of surfaces necessary for the development of this work. Section \ref{sec:proof-ZTFS} is dedicated to the proof of Theorem \ref{Teo:Parametrization-ZTFS}. In Section \ref{sec:proof-LNM} we give the proof of Theorem \ref{t:LNM}.


\section{Preliminaries} \label{sec:preliminaries}

\subsection{Topological surfaces}\label{Subsec:TopologicalSurfaces}

Topological orientable surfaces are classified, up to homeomorphisms, by their genus $g(S)\in \mathbb{N}_{0}\cup\{\infty\}$, and a pair of nested topological spaces ${\rm Ends}_{\infty}(S)\subseteq {\rm Ends}(S)$ homeomorphic to a pair of nested closed subsets of the Cantor set. The spaces ${\rm Ends}(S)$ and ${\rm Ends}_{\infty}(S)$ are called the \emph{end space} and the \emph{non-planar ends space} of $S$ (or, ends accumulated by genus), respectively. Moreover, any pair of nested closed subsets of the Cantor set can be realized as the space of ends of a connected, orientable topological surface. We call to the isolated elements of the end space \emph{punctures} of the surface. For more details, we refer the reader to \cite{Ian}.

\begin{theorem}[Classification of topological surfaces, \cite{Ker}*{\S 7}, \cite{Ian}*{Theorem 1}]\label{Thm:ClassificationOfSurfaces}
	Two orientable surfaces $S_1$ and $S_2$  having the same genus are topological equivalent if and only if there exists a homeomorphism $f: {\rm Ends}(S_1)\to {\rm Ends}(S_2)$ such that $f( {\rm Ends}_{\infty}(S_1))= {\rm Ends}_{\infty}(S_2)$.
\end{theorem}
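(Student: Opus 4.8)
The plan is to prove the two implications separately: the forward direction is a routine invariance statement, while the converse requires an explicit construction of a homeomorphism from the end-space data. Throughout I work with connected, orientable, second countable surfaces, and I recall that the end space is the inverse limit $\mathrm{Ends}(S)=\varprojlim_{K}\pi_0(S\setminus K)$ over the directed system of compact subsets $K\subset S$, while an end lies in $\mathrm{Ends}_\infty(S)$ precisely when every complementary component representing it has infinite genus.

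\textbf{Necessity.} First I would observe that all the data in play are topological invariants. A homeomorphism $h\colon S_1\to S_2$ carries compact sets to compact sets, hence induces a bijection of the inverse systems $\pi_0(S_i\setminus K)$ and therefore a homeomorphism $h_*\colon \mathrm{Ends}(S_1)\to\mathrm{Ends}(S_2)$. Since the genus of a subsurface is itself a homeomorphism invariant, $h_*$ carries ends accumulated by genus to ends accumulated by genus, that is $h_*(\mathrm{Ends}_\infty(S_1))=\mathrm{Ends}_\infty(S_2)$. Taking $f:=h_*$ settles the ``only if'' direction.

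\textbf{Sufficiency: reduction to combinatorial data.} For the converse I would pass to a canonical exhaustion. Using second countability, write each $S_i$ as an increasing union $S_i=\bigcup_{n} P^{i}_{n}$ of compact connected bordered subsurfaces with $P^i_n\subset \mathrm{int}(P^i_{n+1})$, normalized (in the standard Kerékjártó--Richards fashion, by absorbing any compact complementary piece) so that no complementary component of $P^i_n$ in $S_i$ is compact. To such an exhaustion I attach its combinatorial skeleton: a rooted tree whose level-$n$ vertices are the complementary components of $P^i_n$, with edges recording inclusion and each vertex labelled by the genus and the number of boundary curves of the piece $\overline{P^i_{n+1}\setminus P^i_n}$ it cuts off. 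The purpose of this reduction is that $\mathrm{Ends}(S_i)$ is recovered as the space of infinite branches of the tree with its natural topology, and a branch represents an element of $\mathrm{Ends}_\infty(S_i)$ exactly when the genus labels along it are unbounded; the standing hypothesis $g(S_1)=g(S_2)$ records the total genus accumulated over the whole tree.

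\textbf{Sufficiency: building the homeomorphism.} Next I would use the given pair-homeomorphism $f$ to align the two trees. Passing to subsequences of the two exhaustions, I can arrange the trees to be isomorphic as rooted trees in a way that induces $f$ on branches and matches $\mathrm{Ends}_\infty(S_1)$-branches with $\mathrm{Ends}_\infty(S_2)$-branches; here I repeatedly use that $f$ is a homeomorphism of pairs of compact totally disconnected spaces, so a clopen partition on one side pulls back to a clopen partition on the other and can be mutually refined. Along each matched branch I reconcile the genus labels: a branch that does not accumulate genus carries only finitely much genus, an integer that agrees on both sides and can be matched after regrouping finitely many levels, while a branch accumulating genus contributes infinite genus in finite chunks that can be regrouped to agree. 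With trees and labels matched, I build the homeomorphism stagewise: the pieces $\overline{P^1_{n+1}\setminus P^1_n}$ and $\overline{P^2_{n+1}\setminus P^2_n}$ are compact bordered surfaces of equal genus and equal boundary combinatorics, so the classical classification of compact bordered surfaces supplies a homeomorphism between them, which I force to agree with the identification already fixed at stage $n-1$ on the common boundary (adjusting it on a collar by an Alexander-type trick). Taking the union over $n$ yields a bijection $S_1\to S_2$ that is a homeomorphism, being a homeomorphism on each member of a locally finite exhaustion and compatible on overlaps.

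\textbf{Main obstacle.} The delicate point is the simultaneous control of the possibly infinite genus and the nested end data in the limit. The hard part is to carry out the tree-alignment and genus-regrouping \emph{coherently}: I must refine the two exhaustions together so that at every finite stage the boundary curves, the genus chunks, and the clopen pieces of the end space all correspond, and so that these finite matchings remain compatible from one stage to the next, since otherwise the stagewise homeomorphisms fail to patch. Guaranteeing that the regrouping of infinite genus along $\mathrm{Ends}_\infty$-branches can always be synchronized with $f$ itself, rather than only branch by branch in isolation, is where the substance of the argument lies and is exactly what separates the infinite-type classification from the compact case.
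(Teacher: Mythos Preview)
The paper does not prove this theorem: it is stated in the Preliminaries as a classical background result with citations to Ker\'ekj\'art\'o and Richards, and no argument is given. So there is no ``paper's own proof'' to compare against.

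Your sketch is a reasonable outline of the standard Ker\'ekj\'art\'o--Richards argument and is on the right track. The necessity direction is fine. For sufficiency, your tree-alignment and stagewise-gluing strategy is indeed the classical approach, and you correctly identify the crux: arranging the two exhaustions so that the induced clopen partitions of the end spaces, the genus allocations, and the boundary combinatorics all match \emph{coherently} at every finite stage. What is missing from your write-up is precisely the mechanism that guarantees this coherence. In Richards' proof this is handled by first proving a realization lemma (any admissible triple $(g,X,Y)$ with $Y\subset X$ closed subsets of a Cantor set arises from some surface) and then running a back-and-forth construction that alternately refines the exhaustions of $S_1$ and $S_2$; the back-and-forth is what forces compatibility across stages rather than merely at each stage separately. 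As written, your ``passing to subsequences'' and ``regrouping'' steps assert exactly the conclusion you need without exhibiting the inductive bookkeeping that makes it work, so the sketch would not yet be accepted as a proof even though the architecture is correct.
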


A topological surface is of \emph{infinite-type} if it has fundamental group infinitely generated. The simplest infinite-type surface that we can find are the flutes and the Loche Ness monster. 

\begin{definition}[\cite{Bas93}*{p. 423}]
The \textbf{flute surface} is the unique topological surface, up to homeomorphism, of genus zero and with ends space homeomorphic to the ordinal number $\omega +1$, see Figure \ref{Fig:topo_tight_flute}. 
    \begin{figure}[h!]
    	\begin{center}
    		\begin{tikzpicture}[baseline=(current bounding box.north)]
    		\begin{scope}[scale=0.8]
    		\clip (-0.5,-0.4) rectangle (11,3.4);
    		\draw [line width=1pt] (0.5,0) -- (9.8,0);
    		\draw [line width=1pt] (9.5,1) -- (9.8,1.01);
    		\draw [dashed, line width=1pt] (0.5,0.3) ellipse (1mm and 3mm);
    		%
    		\draw [line width=1pt] (0.5,0.6) to[out=0,in=-90] (1.5,2.5);
    		\draw [dashed, line width=1pt] (2,2.5) ellipse (5mm and 2mm);
    		\node at (2,3.1) {$\vdots$};
    		\draw [line width=1pt] (2.5,2.5) to[out=-90,in=180] (3.5,1);
    		\draw [line width=1pt] (3.5,1) to[out=0,in=-90] (4.5,2.5);
    		\draw [dashed, line width=1pt] (5,2.5) ellipse (5mm and 2mm);
    		\node at (5,3.1) {$\vdots$};
    		\draw [line width=1pt] (5.5,2.5) to[out=-90,in=180] (6.5,1);
    		\draw [line width=1pt] (6.5,1) to[out=0,in=-90] (7.5,2.5);
    		%
    		\draw [dashed, line width=1pt] (8,2.5) ellipse (5mm and 2mm);
    		\node at (8,3.1) {$\vdots$};
    		\draw [line width=1pt] (8.5,2.5) to[out=-90,in=180] (9.5,1);
    		\node at (10.3,0.5) {$\ldots$};
    		\draw [dashed, line width=0.6pt] (3.5,0.5) ellipse (2mm and 5mm);
    		\draw [line width=1pt] (3.5,1) arc
    		[
    		start angle=90,
    		end angle=270,
    		x radius=2mm,
    		y radius =5mm
    		] ;
    		\draw [dashed, line width=0.6pt] (6.5,0.5) ellipse (2mm and 5mm);
    		\draw [line width=1pt] (6.5,1) arc
    		[
    		start angle=90,
    		end angle=270,
    		x radius=2mm,
    		y radius =5mm
    		] ;
    		\draw [dashed, line width=0.6pt] (9.5,0.5) ellipse (2mm and 5mm);
    		\draw [line width=1pt] (9.5,1) arc
    		[
    		start angle=90,
    		end angle=270,
    		x radius=2mm,
    		y radius =5mm
    		] ;
    		\end{scope}
    		\end{tikzpicture}
    	\end{center}
    	\caption{\emph{A flute surface.}}
    	\label{Fig:topo_tight_flute}
    \end{figure}
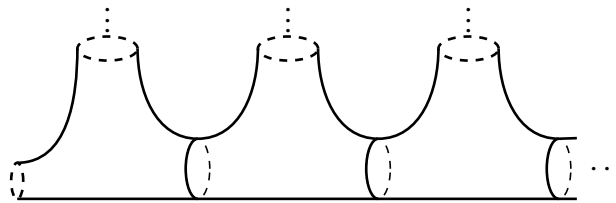
\end{definition}

\begin{definition}
	The \textbf{Loch Ness monster} is the unique infinite-type surface, up to homeomorphism, of infinite genus with exactly one end, see Figure \ref{Fig:LNM}.
\begin{figure}[h!]
	\centering
	\begin{tikzpicture}[baseline=(current bounding box.north)]  
	\begin{scope}[scale=0.45]
	\clip (-5.8,-5) rectangle (12,3.9);
	\draw[line width=1pt] (-3,0) arc (45:315:1.5 and 1);
	\draw [line width=1pt] (-4.8,-0.55) arc
	[
	start angle=180,
	end angle=360,
	x radius=6mm,
	y radius =3mm
	] ;
	\draw [line width=1pt] (-3.82,-0.8) arc
	[
	start angle=-20,
	end angle=200,
	x radius=4mm,
	y radius =2mm
	] ;
	\draw [dashed, line width=0.8pt] (-3,-1.4) arc
	[
	start angle=-90,
	end angle=90,
	x radius=3mm,
	y radius =7mm
	] ;
	\draw [dashed, line width=0.8pt] (-3,0) arc
	[
	start angle=90,
	end angle=270,
	x radius=3mm,
	y radius =7mm
	] ;
	\draw [line width=1pt] (-3,0) to[out=0,in=-90] (-2,1);
	\draw [dashed, line width=0.8pt] (-0.6,1) arc
	[
	start angle=0,
	end angle=180,
	x radius=7mm,
	y radius =3mm
	] ;
	\draw [dashed, line width=0.8pt] (-2,1) arc
	[
	start angle=180,
	end angle=360,
	x radius=7mm,
	y radius =3mm
	] ;
	\draw[line width=1pt] (-0.6,1) arc (-45:225:1 and 1.5);
	\draw [line width=1pt] (-1.15,2.7) arc
	[
	start angle=90,
	end angle=270,
	x radius=3mm,
	y radius =6mm
	] ;
	\draw [line width=1pt] (-1.35,2.5) arc
	[
	start angle=100,
	end angle=-100,
	x radius=2mm,
	y radius =4mm
	] ;
	\draw [line width=1pt] (-0.6,1) to[out=-90,in=180] (0.4,0);
	\draw [dashed, line width=0.8pt] (0.4,-1.4) arc
	[
	start angle=-90,
	end angle=90,
	x radius=3mm,
	y radius =7mm
	] ;
	\draw [dashed, line width=0.8pt] (0.4,0) arc
	[
	start angle=90,
	end angle=270,
	x radius=3mm,
	y radius =7mm
	] ;
	\draw [line width=1pt] (0.4,-1.4) to[out=0,in=90] (1.4,-2.4);
	\draw [dashed, line width=0.8pt] (2.8,-2.4) arc
	[
	start angle=0,
	end angle=180,
	x radius=7mm,
	y radius =3mm
	] ;
	\draw [dashed, line width=0.8pt] (1.4,-2.4) arc
	[
	start angle=180,
	end angle=360,
	x radius=7mm,
	y radius =3mm
	] ;
	\draw [line width=1pt] (2.8,-2.4) to[out=90,in=180] (3.8,-1.4);
	\draw [dashed, line width=0.8pt] (3.8,0) arc
	[
	start angle=90,
	end angle=270,
	x radius=3mm,
	y radius =7mm
	] ;
	\draw [dashed, line width=0.8pt] (3.8,0) arc
	[
	start angle=90,
	end angle=-90,
	x radius=3mm,
	y radius =7mm
	] ;	
	\draw [line width=1pt] (2.8,-2.4) arc (45:-225:1 and 1.5);
	\draw [line width=1pt] (2.2,-3) arc
	[
	start angle=90,
	end angle=270,
	x radius=3mm,
	y radius =6mm
	] ;
	\draw [line width=1pt] (2,-3.2) arc
	[
	start angle=100,
	end angle=-100,
	x radius=2mm,
	y radius =4mm
	] ;
	\draw [line width=1pt](-3,-1.4) -- (0.4,-1.4);
	\draw [line width=1pt](0.4,0) -- (3.8,0);
	\node at (11.5,-0.7) {$\ldots$};
	\draw [line width=1pt] (3.8,0) to[out=0,in=-90] (4.8,1);
	\draw[line width=1pt] (6.2,1) arc (-45:225:1 and 1.5);
	\draw [line width=1pt] (5.6,2.7) arc
	[
	start angle=90,
	end angle=270,
	x radius=3mm,
	y radius =6mm
	] ;
	\draw [line width=1pt] (5.4,2.5) arc
	[
	start angle=100,
	end angle=-100,
	x radius=2mm,
	y radius =4mm
	] ;
	\draw [dashed, line width=0.8pt] (6.2,1) arc
	[
	start angle=0,
	end angle=180,
	x radius=7mm,
	y radius =3mm
	] ;	
	\draw [dashed, line width=0.8pt] (6.2,1) arc
	[
	start angle=0,
	end angle=-180,
	x radius=7mm,
	y radius =3mm
	] ;	
	\draw [line width=1pt] (6.2,1) to[out=-90,in=180] (7.2,0);
	\draw [line width=1pt](3.8,-1.4) -- (7.2,-1.4);
	\draw [dashed, line width=0.8pt] (7.2,0) arc
	[
	start angle=90,
	end angle=270,
	x radius=3mm,
	y radius =7mm
	] ;
	\draw [dashed, line width=0.8pt] (7.2,0) arc
	[
	start angle=90,
	end angle=-90,
	x radius=3mm,
	y radius =7mm
	] ;	
	\draw [line width=1pt] (7.2,-1.4) to[out=0,in=90] (8.2,-2.4);
	\draw [line width=1pt] (9.6,-2.4) arc (45:-225:1 and 1.5);
	\draw [line width=1pt] (9,-3) arc
	[
	start angle=90,
	end angle=270,
	x radius=3mm,
	y radius =6mm
	] ;
	\draw [line width=1pt] (8.8,-3.2) arc
	[
	start angle=100,
	end angle=-100,
	x radius=2mm,
	y radius =4mm
	] ;
	\draw [dashed, line width=0.8pt] (9.6,-2.4) arc
	[
	start angle=0,
	end angle=180,
	x radius=7mm,
	y radius =3mm
	] ;	
	\draw [dashed, line width=0.8pt] (9.6,-2.4) arc
	[
	start angle=0,
	end angle=-180,
	x radius=7mm,
	y radius =3mm
	] ;	
	\draw [line width=1pt] (9.6,-2.4) to[out=90,in=180] (10.6,-1.4);
	\draw [line width=1pt](7.2,0) -- (10.6,0);
	\draw [dashed, line width=0.8pt] (10.6,0) arc
	[
	start angle=90,
	end angle=270,
	x radius=3mm,
	y radius =7mm
	] ;	
	\draw [dashed,  line width=0.8pt] (10.6,0) arc
	[
	start angle=90,
	end angle=-270,
	x radius=3mm,
	y radius =7mm
	] ;	
	\end{scope}
	\end{tikzpicture}
	\caption{\emph{The Loch Ness monster.}}
	\label{Fig:LNM}
\end{figure}
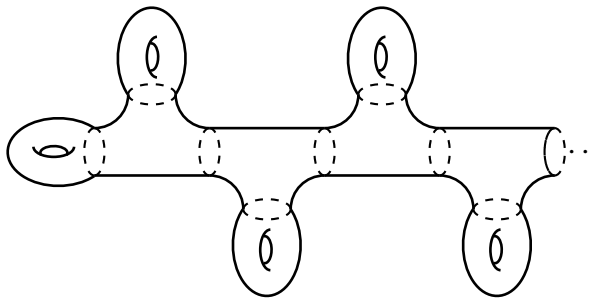
\end{definition}

In section \ref{section:Loch_ness_monster} we use the following result to prove the first part of Theorem \ref{t:LNM}.

\begin{lemma}[\cite{SPE}*{\S 5.1., p. 320}]\label{lemma:spec} 
	The surface $S$ has exactly one end if and only if for all compact subset $K \subset S$ there is a compact subset $K^{'}\subset S$ such that $K\subset K^{'}$ and $S \setminus  K^{'}$ is connected.
\end{lemma}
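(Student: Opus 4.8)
The plan is to argue directly from the inverse-limit description of the space of ends recalled in \S\ref{Subsec:TopologicalSurfaces} and \cite{Ian}. For a connected, non-compact surface $S$ one fixes an exhaustion $K_0\subseteq K_1\subseteq\cdots$ by compact sets with $\bigcup_n K_n=S$ and $K_n\subseteq \mathrm{int}(K_{n+1})$, and $\Ends(S)=\varprojlim_n \pi_0(S\setminus K_n)$, the bonding maps being induced by the inclusions $S\setminus K_{n+1}\hookrightarrow S\setminus K_n$. An \emph{end} is thus a compatible choice $\varepsilon=(\varepsilon(K_n))_n$ of a component $\varepsilon(K_n)$ of $S\setminus K_n$ with $\varepsilon(K_{n+1})\subseteq \varepsilon(K_n)$, and ``exactly one end'' means this inverse limit is a single point.

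The reverse implication is the short one. Assume every compact $K$ is contained in a compact $K'$ with $S\setminus K'$ connected. Since $S$ is non-compact it has at least one end, so I would only need to rule out two distinct ends $\varepsilon_1\neq\varepsilon_2$. There would then be a compact $K$ with $\varepsilon_1(K)\neq\varepsilon_2(K)$, i.e.\ two distinct components of $S\setminus K$. Choosing a compact $K'\supseteq K$ with $S\setminus K'$ connected forces $\varepsilon_1(K')=\varepsilon_2(K')$, and since this common connected set lies in $S\setminus K$ it is contained in a single component of $S\setminus K$; by compatibility both $\varepsilon_1(K)$ and $\varepsilon_2(K)$ equal that component, a contradiction. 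Hence $S$ has exactly one end.

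The forward implication is where the real work lies, and I expect it to be the main obstacle. The subtlety is that ``one end'' is a statement about the inverse limit, whereas for a fixed compact $K$ the set $S\setminus K$ may have several components, including relatively compact ones, so connectedness of a complement is not automatic. I would proceed in two steps. First, a \emph{saturation} step: given $K$, replace it by $\hat{K}$, the union of $K$ with all relatively compact components of $S\setminus K$; using that $S$ is exhausted by compact subsurfaces one checks that $\hat{K}$ is again compact and that every component of $S\setminus\hat{K}$ is non-compact. Along such a \emph{saturated} exhaustion each $\pi_0(S\setminus K_n)$ is a finite set of non-compact components. Second, I would observe that the bonding maps $\pi_0(S\setminus K_{n+1})\to\pi_0(S\setminus K_n)$ are then \emph{surjective}: a non-compact component $V$ of $S\setminus K_n$ meets $S\setminus K_{n+1}$ in a non-compact open set, which therefore contains a non-compact component of $S\setminus K_{n+1}$ mapping to $V$.

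The conclusion then follows from the standard fact that in an inverse system of finite non-empty sets with surjective bonding maps every projection from the inverse limit is surjective. Since $\Ends(S)=\varprojlim_n\pi_0(S\setminus K_n)$ is a single point, each $\pi_0(S\setminus K_n)$ must be a singleton, so $S\setminus K_n$ is connected for all $n$; given any compact $K$ it then suffices to pick $n$ with $K\subseteq K_n$ and set $K'=K_n$. The two points demanding genuine care are the compactness of the saturation $\hat{K}$ (equivalently, that only finitely many complementary components fail to be absorbed) and the surjectivity of the bonding maps, both of which rest on $S$ being a surface exhausted by compact subsurfaces; granting these, the inverse-limit argument closes the proof.
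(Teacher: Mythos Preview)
The paper does not supply its own proof of this lemma: it is stated with a citation to \cite{SPE} and used as a black box in \S\ref{section:Loch_ness_monster}. Hence there is no in-paper argument to compare your proposal against.

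That said, your sketch is sound. The reverse implication is correct as written. For the forward implication your two-step plan (saturate the exhaustion so that all complementary components are unbounded, then use surjectivity of the bonding maps on an inverse system of finite sets to force each $\pi_0(S\setminus K_n)$ to be a singleton) is the standard route and works; you correctly flag that the compactness of the saturation $\hat K$ and the surjectivity of the bonding maps are the places needing verification. Both follow once one exhausts $S$ by compact subsurfaces with boundary: such a subsurface has finitely many boundary circles, hence $S$ minus it has finitely many components, which bounds the number of unbounded components of $S\setminus K$ and shows the relatively compact ones are trapped inside a fixed compact stage. If you want to tighten the write-up, that is the lemma to isolate.
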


We say that a simple close curved on a surface $S$ is \emph{essential} if it is not isotopic to the boundary  loop of a disk or a punctured disk. Recall that a \emph{topological pair of pants} is a topological surface homeomorphic to a three punctured sphere. 

\begin{definition}\label{Def:PairPantsDecomposition}
    We say that a collection of pairwise disjoint essential curves in a surface $S$ is a \textbf{pair of pants decomposition} of $S$ if it decomposes the surface into a disjoint union of pair of pants.    
\end{definition}

Observe that any topological surface admits a pair of pants decomposition \cite{Alvarez2004}.


\subsection{Fenchel-Nielsen parameters of a hyperbolic surfaces}\label{Subsec:FenchelNielsen} For the results presented in this section we refer the reader to \cite{FaMa2012}*{Chapter 10} for the definition of the Fenchel-Nielsen parameters in the context of finite-type surfaces, and to \cite{AlLiPaWeSun2011} for surfaces of infinite-type.

A \emph{geodesic pair of pants} is a hyperbolic surface $P$ of finite hyperbolic area such that its interior is homeomorphic to a pair of pants and with at least one boundary a closed geodesic, see Figure \ref{Fig:GeodesicPairPants}. A \emph{tight pair of pants} is a pair of pants that has exactly one puncture, see Figure \ref{Fig:GeodesicPairPants}-b. Similarly, a collection of pairwise disjoint of essential geodesic curves in a surface $S$ is \emph{geometric pair of pants decomposition} of $S$ if it decomposes the surface into geodesic pair of pants. In \cite{AlLiPaWeSun2011}*{Theorem 4.5} the authors give sufficient conditions under which a topological and geodesic decomposition of a surface in pair of pants is related .    
\begin{figure}[h!]
	\centering
	\begin{tabular}{ccc}
		\begin{tikzpicture}[baseline=(current bounding box.north)]
		\begin{scope}[scale=0.8]
		\clip (-4.4,-0.9) rectangle (0.4,2.6);
		\draw [dashed, line width=1pt] (-4,0) ellipse (2mm and 5mm);
		\draw [dashed, line width=1pt] (-2,2.3) ellipse (5mm and 2mm);
		\draw [dashed, line width=1pt]  (0,0) ellipse (2mm and 5mm);
		\draw [line width=1pt](-4,-0.5) to[out=0,in=-180] (0,-0.5);
		\draw [line width=1pt] (-4,0.5) to[out=0,in=-90] (-2.5,2.3);
		\draw [line width=1pt] (-1.5,2.3) to[out=-90,in=180] (0,0.5);
		\end{scope}
		\end{tikzpicture} &  \begin{tikzpicture}[baseline=(current bounding box.north)]
		\begin{scope}[scale=0.8]
		\clip (-0.4,-0.9) rectangle (4.4,2.6);
		\draw [dashed, line width=1pt]  (0,0) ellipse (2mm and 5mm);
		\draw [dashed, line width=1pt] (4,0) ellipse (2mm and 5mm);
		\draw [line width=1pt](0,-0.5) to[out=0,in=-180] (4,-0.5);
		\draw [line width=1pt] (0,0.5) to[out=0,in=-90] (1.98,2.5);
		\draw [line width=1pt] (2.02,2.5) to[out=-90,in=180] (4,0.5);
		\end{scope}
		\end{tikzpicture} &  \begin{tikzpicture}[baseline=(current bounding box.north)]
		\begin{scope}[scale=0.8]
		\clip (-0.4,-0.9) rectangle (4.4,2.6);
		\draw [dashed, line width=1pt]  (0,1) ellipse (2mm and 5mm);
		\draw [line width=1pt] (0,1.5) to[out=0,in=-180] (2.5,2.5);
		\draw [line width=1pt] (0,0.5) to[out=0,in=180] (2.5,-0.5);
		\draw [line width=1pt] (2.5,2.5) to[out=-160,in=160] (2.5,-0.5);
		\end{scope}
		\end{tikzpicture}  \\
		a)  & b)  & c) \\
	\end{tabular}
	\caption{\textit{Three possible kinds of hyperbolic pants with geodesic boundary. In particular, b) represents a tight pair of pants.}} 
	\label{Fig:GeodesicPairPants}
\end{figure}
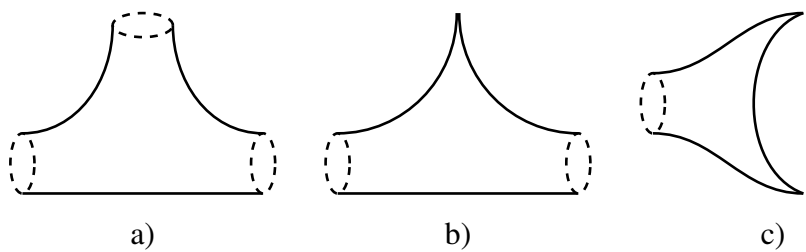

\subsubsection{Length and twist parameters} Let $P$ be a geodesic pair of pants and fix $\alpha$ a boundary curve of $P$. We choose a \emph{marked point} $x$ in $\alpha$ in the following way: Let $\beta$ be either a boundary component (different from $\alpha$)  or a puncture of $P$. Let $\gamma$ be the orthogeodesics between $\alpha$ and $\beta$. Then the marked point $x$ in $\alpha$ is defined as the intersection point of $\alpha$ with $\gamma$. In Figure \ref{Fig:Orthogeodesic} is shown such marked point $x$ in $\alpha$ which is obtained from drawing the orthogeodesic $\gamma$ connecting the boundary component $\alpha$ with $\beta$. The hyperbolic length  of the geodesic $\alpha$ is denoted by $l(\alpha)$. 

\begin{figure}[h!]
	\centering
	\begin{tabular}{cc}
		\begin{tikzpicture}[baseline=(current bounding box.north)]
		\begin{scope}[scale=0.8]
		\clip (-4.4,-1.1) rectangle (0.4,2.6);
		\draw [dashed, red, line width=1pt] (-4,0) ellipse (2mm and 6mm);
		\draw [dashed, line width=1pt] (-2,2.3) ellipse (6mm and 2mm);
		\draw [dashed, line width=1pt]  (0,0) ellipse (2mm and 6mm);
		\draw [line width=1pt](-4,-0.6) to[out=0,in=-180] (0,-0.6);
		\draw [blue, line width=1pt] (-4,0.6) to[out=0,in=-90] (-2.6,2.3);
		\draw [line width=1pt] (-1.4,2.3) to[out=-90,in=180] (0,0.6);
		\draw [blue, line width=1pt] (-3.5,0.5)--(-3.6,0.66);
		\draw [blue, line width=1pt] (-3.85,0.43)--(-3.5,0.5);
		\node at (-4,0.9) {{\small $x$}};
		\node at (-4,-0.9) {{\small {\color{red}$\alpha$}}};
		\node at (-1,2.3) {{\small $\beta$}};
		\node at (-2.6,1) {{\small {\color{blue}$\gamma$}}};
		\node at (-4,0.57) {{\small $\bullet$}};
		\node at (-2,0.1) {{\small $P$}};
		\end{scope}
		\end{tikzpicture} &  \begin{tikzpicture}[baseline=(current bounding box.north)]
		\begin{scope}[scale=0.8]
		\clip (-0.4,-1.1) rectangle (4.4,2.6);
		\draw [dashed, red, line width=1pt]  (0,0) ellipse (2mm and 6mm);
		\draw [dashed, line width=1pt] (4,0) ellipse (2mm and 6mm);
		\draw [line width=1pt](0,-0.6) to[out=0,in=-180] (4,-0.6);
		\draw [blue, line width=1pt] (0,0.6) to[out=0,in=-90] (1.98,2.5);
		\draw [line width=1pt] (2.02,2.5) to[out=-90,in=180] (4,0.6);
		\draw [blue, line width=1pt] (0.5,0.46)--(0.4,0.66);
		\draw [blue, line width=1pt] (0.15,0.4)--(0.5,0.46);
		\node at (0,0.9) {\small{$x$}};
		\node at (0,-0.9) {\small{ {\color{red}$\alpha$}}};
		\node at (2.5,2.3) {\small{$\beta$}};
		\node at (1.8,1) {{\small {\color{blue}$\gamma$}}};
		\node at (0,0.57) {{\small $\bullet$}};
		\node at (2,0.1) {{\small $P$}};
		\end{scope}
		\end{tikzpicture}\\
	\end{tabular}
	\caption{\emph{The point $x \in \alpha$ is the intersection of $\alpha$ with the orthogeodesic $\gamma$.}}
	\label{Fig:Orthogeodesic}
\end{figure}
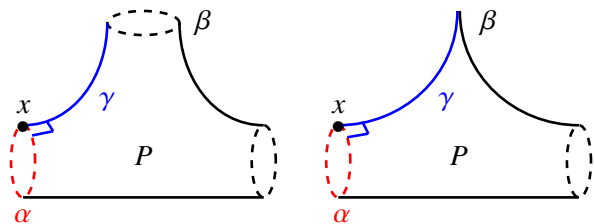

 Let $P$ and $P^\prime$ be two geodesic pair of pants with $\alpha$ and $\alpha'$ boundary geodesic curve of $P$ and $P^\prime$, respectively.  Let $\gamma$ and $\gamma'$ be the geodesic in  $P$ and $P'$, respectively, that define the marked points $x$ in $\alpha$ and $x'$ in $\alpha'$ as above. Suppose that $\alpha$ and $\alpha'$ have the same hyperbolic length, that is, $l(\alpha)=l(\alpha')$. Then we identify  $\alpha$ and $\alpha^\prime$, by gluing with an isometry $T$, to obtain a surface $S$ from  $P$ and $P'$. This isometric identification is determined by the pair $\{l(\alpha),t(\alpha)\}$, such that the \emph{length parameter} $l(\alpha)$ is  the hyperbolic length of $\alpha$, and the \emph{twist parameter} $t(\alpha) \in \left(-\frac{1}{2},\frac{1}{2}\right]$ corresponds to the relative position of the marked points $x$ in $\alpha$ and $x'$ in $\alpha'$, which is given as follows: 
 \begin{itemize}
 \item[$\ast$] If $x=T(x')$, then $t(\alpha)=0$.
 
 \item[$\ast$] If $x\neq T(x')$, then $\vert t(\alpha)\vert \leq \frac{1}{2}$ is equal to the hyperbolic length of the shorter arc $\sigma$ having endpoints $x$ and $T(x')$ contained in $\alpha \setminus \{x,T(x')\}$, divided by $l(\alpha)$. If $\vert t(\alpha)\vert = \frac{1}{2}$, then $t(\alpha)=\frac{1}{2}$. If $\vert t(\alpha) \vert < \frac{1}{2}$ then we orient $\alpha$ with the orientation induced from $P$. If $\sigma$ is the arc from $x$ to $T(x^\prime)$ then $t(\alpha)=\vert t(\alpha) \vert$; otherwise $t(\alpha)=-\vert t(\alpha) \vert$. See Figure \ref{Fig:TwistParameter}. a). 
 \end{itemize}

 \begin{figure}[h!]
 	\centering
 	\begin{tabular}{ccc}
 		\begin{tikzpicture}[baseline=(current bounding box.north)]
 		\begin{scope}[scale=1]
 		\clip (-2.4,-0.7) rectangle (3.4,3.4);
 		\draw [dashed, line width=1pt] (-2,-0.5) arc
 		[
 		start angle=-90,
 		end angle=90,
 		x radius=2mm,
 		y radius =5mm
 		] ;
 		\draw [dashed, line width=1pt] (-2,0.5) arc
 		[
 		start angle=90,
 		end angle=270,
 		x radius=2mm,
 		y radius =5mm
 		] ;
 		\draw [dashed, line width=1pt] (-2,1.5) arc
 		[
 		start angle=-90,
 		end angle=90,
 		x radius=2mm,
 		y radius =5mm
 		] ;
 		\draw [dashed, line width=1pt] (-2,2.5) arc
 		[
 		start angle=90,
 		end angle=270,
 		x radius=2mm,
 		y radius =5mm
 		] ;
 		\draw [line width=1pt]  (-0.5,1) ellipse (2mm and 15mm);
 		\draw [<-<, >=latex, red, line width=1pt] (-0.373,2.15) arc
 		[
 		start angle=50,
 		end angle=147,
 		x radius=2mm,
 		y radius =15mm
 		] ;
 		\draw [line width=1pt](-2,-0.5) -- (-0.5,-0.5);
 		\draw [line width=1pt] (-2,0.5) to[out=0,in=0] (-2,1.5);
 		\draw [line width=1pt] (-2,2.5) -- (-0.5,2.5);
 		\draw [blue, line width=1pt] (-2.2,1.84) -- (-0.66,1.84);
 		\draw [blue, line width=0.7pt] (-0.9,1.7) -- (-0.67,1.7);
 		\draw [blue, line width=0.7pt] (-0.9,1.7) -- (-0.9,1.84);
 		\node at (-0.66,1.8) {{\small$\bullet$}};
 		\node at (-0.9,2.1) {{\small $x$}};
 		\node at (-0.37,2.15) {{\small $\bullet$}};
 		\node at (1.4,2.15) {{\small $\bullet$}};
 		\node at (1.2,2.5) {{\small $x'$}};
 		\node at (0.2,2.5) {{\small $T(x')$}};
 		\node at (-0.6,2.8) {{\color{red}$\sigma$}};
 		\node at (-1.3,1.5) {{\color{blue}$\gamma$}};
 		\node at (2.3,1.8) {{\color{blue}$\gamma'$}};
 		\node at (-1,1) {{\small $\alpha$}};
 		\node at (-1.3,0) {{\small $P$}};
 		\node at (2,1) {{\small $\alpha'$}};
 		\node at (2.3,0) {{\small $P'$}};
 		\draw [dashed, blue, line width=1pt] (1.4,2.15) -- (2.8,2.15);
 		\draw [blue, line width=0.7pt] (1.35,2) -- (1.55,2);
 		\draw [blue, line width=0.7pt] (1.55,2) -- (1.55,2.15);
 		\draw [<-, >=latex, line width=1pt] (-0.2,2.15) -- (1.2,2.15);
 		\draw [line width=1pt]  (1.5,1) ellipse (2mm and 15mm);
 		\draw [dashed, line width=1pt] (3,-0.5) arc
 		[
 		start angle=-90,
 		end angle=90,
 		x radius=2mm,
 		y radius =5mm
 		] ;
 		\draw [dashed, line width=1pt] (3,0.5) arc
 		[
 		start angle=90,
 		end angle=270,
 		x radius=2mm,
 		y radius =5mm
 		] ;
 		\draw [dashed, line width=1pt] (3,1.5) arc
 		[
 		start angle=-90,
 		end angle=90,
 		x radius=2mm,
 		y radius =5mm
 		] ;
 		\draw [dashed, line width=1pt] (3,2.5) arc
 		[
 		start angle=90,
 		end angle=270,
 		x radius=2mm,
 		y radius =5mm
 		] ;
 		\draw [line width=1pt](3,-0.5) -- (1.5,-0.5);
 		\draw [line width=1pt] (3,0.5) to[out=180,in=180] (3,1.5);
 		\draw [line width=1pt] (3,2.5) -- (1.5,2.5);
 		\end{scope}
 		\end{tikzpicture}& & \begin{tikzpicture}[baseline=(current bounding box.north)]
 		\begin{scope}[scale=1]
 		\clip (-2.4,-0.7) rectangle (1.4,3.4);
 		\draw [dashed, line width=1pt] (-2,-0.5) arc
 		[
 		start angle=-90,
 		end angle=90,
 		x radius=2mm,
 		y radius =5mm
 		] ;
 		\draw [dashed, line width=1pt] (-2,0.5) arc
 		[
 		start angle=90,
 		end angle=270,
 		x radius=2mm,
 		y radius =5mm
 		] ;
 		\draw [dashed, line width=1pt] (-2,1.5) arc
 		[
 		start angle=-90,
 		end angle=90,
 		x radius=2mm,
 		y radius =5mm
 		] ;
 		\draw [dashed, line width=1pt] (-2,2.5) arc
 		[
 		start angle=90,
 		end angle=270,
 		x radius=2mm,
 		y radius =5mm
 		] ;
 		\draw [red, dashed, line width=0.3pt] (-0.5,-0.5) arc
 		[
 		start angle=-90,
 		end angle=90,
 		x radius=2mm,
 		y radius =15mm
 		] ;
 		\draw [red, line width=1pt] (-0.5,2.5) arc
 		[
 		start angle=90,
 		end angle=270,
 		x radius=2mm,
 		y radius =15mm
 		] ;
 		\draw [line width=1pt](-2,-0.5) -- (-0.5,-0.5);
 		\draw [line width=1pt] (-2,0.5) to[out=0,in=0] (-2,1.5);
 		\draw [line width=1pt] (-2,2.5) -- (-0.5,2.5);
 		\draw [blue, line width=1pt] (-2.2,1.84) -- (-0.66,1.84);
 		\draw [blue, line width=0.7pt] (-0.9,1.7) -- (-0.67,1.7);
 		\draw [blue, line width=0.7pt] (-0.9,1.7) -- (-0.9,1.84);
 		\node at (-0.66,1.8) {{\small$\bullet$}};
 		\node at (-0.9,2.1) {{\small $x$}};
 		\node at (-0.4,2.1) {{\small $x'$}};
 		\node at (-1.3,1.5) {{\color{blue}$\gamma$}};
 		\node at (0.3,1.5) {{\color{blue}$\gamma'$}};
 		\node at (-1,1) {{\small {\color{red}$\alpha$}}};
 		\node at (-1.3,0) {{\small $P$}};
 		\node at (-0.49,1) {{\small {\color{red}$\alpha'$}}};
 		\node at (0.3,0) {{\small $P'$}};
 		\draw [blue, line width=1pt] (-0.67,1.84) -- (1.2,1.84);
 		\draw [blue, line width=0.7pt] (-0.67,1.7) -- (-0.4,1.7);
 		\draw [blue, line width=0.7pt] (-0.4,1.7) -- (-0.4,1.84);
 		\draw [dashed, line width=1pt] (1,-0.5) arc
 		[
 		start angle=-90,
 		end angle=90,
 		x radius=2mm,
 		y radius =5mm
 		] ;
 		\draw [dashed, line width=1pt] (1,0.5) arc
 		[
 		start angle=90,
 		end angle=270,
 		x radius=2mm,
 		y radius =5mm
 		] ;
 		\draw [dashed, line width=1pt] (1,1.5) arc
 		[
 		start angle=-90,
 		end angle=90,
 		x radius=2mm,
 		y radius =5mm
 		] ;
 		\draw [dashed, line width=1pt] (1,2.5) arc
 		[
 		start angle=90,
 		end angle=270,
 		x radius=2mm,
 		y radius =5mm
 		] ;
 		\draw [line width=1pt](-0.5,-0.5) -- (1,-0.5);
 		\draw [line width=1pt] (1,0.5) to[out=180,in=180] (1,1.5);
 		\draw [line width=1pt] (1,2.5) -- (-0.5,2.5);
 		\end{scope}
 		\end{tikzpicture}\\
 		\emph{a). Twist parameter.}& &\emph{b). If $t(\alpha)=0$},\\
 			                     &&\emph{then $\gamma \cup \gamma'$ is orthogonal to $\alpha$.}\\
 	\end{tabular}
 \caption{}
 \label{Fig:TwistParameter}
 \end{figure}

\begin{remark}\label{remark:BiinfiniteGeodesic}
	 If the twist parameter $t(\alpha)=0$, then $\gamma \cup \gamma'$ is a geodesic in $S$ that is orthogonal to the closed geodesic $\alpha$, see Figure \ref{Fig:TwistParameter}. b). 
\end{remark}

\begin{definition}
Let $S$ be a surface obtained by gluing countably many geodesic pair of pants in the way described above. These gluing have associated a geometric pair of pants  decomposition $\{\alpha_i: i \in \mathbb{N}\}$ of $S$. Let $l_{i}(\alpha)=l_i$ and $t_{i}(\alpha)=t_i$ be the length and twist parameter respectively of $\alpha_i$. The collection of pairs 
\begin{equation}
(\{l_{i},t_{i}\})_{i\in\mathbb{N}},
\end{equation}
is called the \textbf{Fenchel-Nielsen parameters} of $S$. As the hyperbolic metric on $S$ is uniquely determined by its Fenchel-Nielsen parameters, then 
\[
S:=S(\{l_{i},t_{i}\})_{i\in\mathbb{N}}.
\] 
\end{definition}

The surface $S$ might not be complete in the induced hyperbolic metric. V. \'Alvarez and J. M. Rodr\'iguez in \cite{Alvarez2004} showed that the boundary of the metric completion of $S$ consists of simple closed geodesics and bi-infinite simple geodesics. Moreover, they proved that by attaching funnels to the closed geodesics and attaching geodesic half-planes to the
bi-infinite geodesics of the boundary of the metric completion of $S$, we obtain a surface $\hat{S}$ homeomorphic to $S$ with a geodesically complete hyperbolic metric such that the inclusion $i: S \hookrightarrow \hat{S}$ is an isometric embedding. Conversely, any geodesically complete surface is obtained by attaching funnels and half-planes to the convex core of the surface, see also \cite{BasmajianSaric}.

\medskip

We are in a position to define the so-called \emph{tight flute surfaces}.

\begin{definition}[\cite{Bas93}*{p. 423}]\label{definition:tight_flute_surface}
	A \textbf{tight flute surface} $S$ is a surface obtained by starting with a geodesic pair of pants $P_0$ with two punctures and then consecutively gluing tight pairs of pants $P_n$, $n\geq 1$. 
\end{definition}	
	
If $S$ is a tight flute surface, we denote by $\alpha_n$ the closed geodesic of the boundary of the surface obtained after gluing $n$ geodesic pairs of pants and let $l_n$ and $t_n$ be the length and twist parameters associated to $\alpha_n$. In terms of Fenchel-Nielsen parameters this surface is denoted by $S:=S(\{l_n, t_n\})_{n\in\mathbb{N}}$. 

\begin{definition}
If all twist parameters of a tight flute surface are zero, that is $S:=S(\{l_n,0\})_{n\in\mathbb{N}}$, then we called it a \textbf{zero-twist tight flute surface}. 
\end{definition}




\section{Proof of Theorem \ref{Teo:Parametrization-ZTFS}}\label{sec:proof-ZTFS}  

We begin by associating to each  zero-twist flute surface $S$ a sequence $\textbf{x}$ of positive real numbers and a Fuchsian group $\Gamma_{\mathbf{x}}$ such that the convex core of $\mathbb{H}^2/\Gamma_{\mathbf{x}}$ is isometric to $S$. We do this by constructing explicitly a hyperbolic ideal polygon $\mathcal{P}$ by cutting $S$ along an infinite collection of bi-infinite geodesics which serves as fundamental domain for $\Gamma_{\mathbf{x}}$.

\medskip

Let $S=S(\{l_n,0\})_{n\in\mathbb{N}_{0}}$ be a zero-twist flute surface (see Figure \ref{Fig:tight_flute_1}). For each $n\in\mathbb{N}$, let $\alpha_{n}$ be the closed geodesic in $S$, which comes from gluing the geodesic pair of pants $P_{n-1}$ and $P_{n}$. Let $\textbf{0}$ and $s_{0}$ be the punctures of $P_0$ and for $n\geq 1$, let $s_{n}$ be the unique puncture of $P_n$. Now we describe how to obtain $\mathcal{P}$ by removing a collection $\{\gamma_n\}_{n\in \mathbb{N}_0}$ of geodesics in $S$ having endpoints on the punctures of $S$. Let be the geodesics $\gamma_0 \subset P_0$ connecting the two punctures of $P_0$, and $\gamma_n$ connecting the punctures  $s_{n-1}$ and $s_n$, the last one is orthogonal to the close geodesic $\alpha_{n-1}$, see Remark \ref{remark:BiinfiniteGeodesic}. Finally, we draw on $S$ the geodesic ray $\beta$ orthogonal to each $\alpha_{n}$ and having one endpoint in $\textbf{0}$ (see Figure \ref{Fig:tight_flute_1}). 
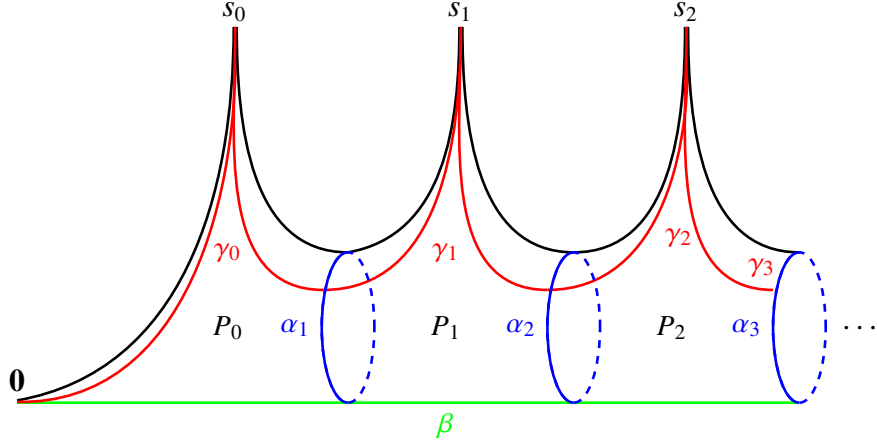
\begin{figure}[h!]
	\begin{center}
		\begin{tikzpicture}[baseline=(current bounding box.north)]
		\begin{scope}[scale=1]
		\clip (0,-0.5) rectangle (11.5,5.5);
		\draw [green, line width=1pt] (0.1,0) to[out=0,in=180] (4.5,0);
		\draw [green, line width=1pt] (4.5,0) to[out=0,in=180] (7.5,0);
		\draw [green, line width=1pt] (7.5,0) to[out=0,in=180] (10.5,0);
		\node at (0.1,0.3) {$\textbf{0}$};
		\node at (2.98,5.2) {$s_{0}$};
		\node at (5.98,5.2) {$s_{1}$};
		\node at (8.98,5.2) {$s_{2}$};
		\draw [line width=1pt] (0.1,0.03) to[out=10,in=-90] (2.98,5);
		\draw [red, line width=1pt] (0.1,0.01) to[out=-1,in=-88] (3,5);
		\node at (2.9,2) {{\small {\color{red} $\gamma_{0}$}}};
		\draw [line width=1pt] (3.02,5) to[out=-90,in=180] (4.5,2);
		\draw [line width=1pt] (4.5,2) to[out=10,in=-90] (5.98,5);
		\draw [red, line width=1pt] (3,5) to[out=-90,in=180] (4.17,1.5);
		\draw [red, line width=1pt] (4.17,1.5) to[out=0,in=-90] (6,5);
	    \node at (5.8,2) {{\small {\color{red}$\gamma_{1}$}}};
		\draw [line width=1pt] (6.02,5) to[out=-90,in=180] (7.5,2);
		\draw [line width=1pt] (7.5,2) to[out=0,in=-90] (8.98,5);
		\draw [red, line width=1pt] (6,5) to[out=-90,in=180] (7.15,1.5);
		\draw [red, line width=1pt] (7.15,1.5) to[out=0,in=-86] (9,5);
		\node at (8.9,2.2) {{\small{\color{red}$\gamma_{2}$}}};
		\draw [line width=1pt] (9.02,5) to[out=-90,in=180] (10.5,2);
		\draw [red, line width=1pt] (9,5) to[out=-90,in=180] (10.15,1.5);
		\node at (10,1.8) {{\small {\color{red}$\gamma_{3}$}}};
		\draw [blue, dashed, line width=1pt] (4.5,1) ellipse (3.5mm and 10mm);
		\draw [blue, line width=1pt] (4.5,2) arc
		[
		start angle=90,
		end angle=270,
		x radius=3.5mm,
		y radius =10mm
		] ;
		\node at (3.8,1) {{\small {\color{blue}$\alpha_{1}$}}};
		\draw [blue, dashed, line width=1pt] (7.5,1) ellipse (3.5mm and 10mm);	
		\draw [blue, line width=1pt] (7.5,2) arc
		[
		start angle=90,
		end angle=270,
		x radius=3.5mm,
		y radius =10mm
		] ;
		\node at (6.8,1) {{\small {\color{blue}$\alpha_{2}$}}};
		\draw [blue, dashed, line width=1pt] (10.5,1) ellipse (3.5mm and 10mm);	
		\draw [blue, line width=1pt] (10.5,2) arc
		[
		start angle=90,
		end angle=270,
		x radius=3.5mm,
		y radius =10mm
		] ;
		\node at (9.8,1) {{\small {\color{blue}$\alpha_{3}$}}};
		\node at (11.3,1) {$\ldots$};
		\node at (2.9,1) {{\small$P_{0}$}};
		\node at (5.8,1) {{\small$P_{1}$}};
		\node at (5.8,-0.3) {{\small {\color{green}$\beta$}}};
		\node at (8.8,1) {{\small$P_{2}$}};
		\end{scope}
		\end{tikzpicture}
	\end{center}
	\caption{\emph{A zero-twist tight flute surface.}}
	\label{Fig:tight_flute_1}
\end{figure}
Note that $S$ admits a reflection $\tau$ whose connected components of fixed points are the geodesic arcs $ \gamma_ {n} $ and $ \beta $.

Now, we cut $S$ along each $\gamma_{n}$. Then $S$ turns into an ideal hyperbolic polygon $\mathcal{P}$ with infinitely many sides. More precisely, the edges and vertices of $\mathcal{P}$ are given as follows: 
\begin{itemize}
	\item[$\ast$] For each $n\geq 0$, let $\gamma_{n}^{+}$ and $\gamma_{n}^{-}$ be the sides of $\mathcal{P}$ coming from cutting $S$ along $\gamma_n$.
	\item[$\ast$] For each $n\geq 0$, let $s_n^+$ and $s_n^-$ be the ideal vertices of $\mathcal{P}$ coming from the puncture $s_n$.
\end{itemize}

\begin{remark}\label{Rem:PolygonWithIdentifications}
	For each $n\geq 0$, the sides $\gamma_{n}^{+}$ and $\gamma_{n}^{-}$ of the ideal hyperbolic polygon $\mathcal{P}$ are identified by an element $g_{n}$ of ${\rm PSL}(2,\mathbb{R})$. If we identify the sides $\gamma_{n}^{+}$ and $\gamma_{n}^{-}$ using $g_{n}$, then we recover the zero-twist tight surface $S(\{l_n,0\})_{n\in\mathbb{N}_{0}}$.
\end{remark}

The ideal hyperbolic polygon $\mathcal{P}$ can be thought in the hyperbolic plane $\mathbb{H}^2$ satisfying the following properties (see Figure \ref{Fig:RealizedPolygonP}):
\begin{itemize}
	\item[$\ast$] Its vertices are on the real axis. Up to take a real translation, we can assume that the vertex $\textbf{0}$ of $\mathcal{P}$ is equal to the complex number zero and that the geodesic ray $\beta$ coincides with the imaginary axis. The collection of vertices $(s_{n}^{+})_{n\in\mathbb{N}_{0}}$ defines a strictly increasing sequence of positive real numbers. The collection of vertices $(s_{n}^{-})_{n\in\mathbb{N}_{0}}$ defines a strictly decreasing sequence of negative real numbers. Given that $\mathcal{P}$ has a reflexive symmetry fixing the ray $\beta$, then $s_{n}^{+}$ and $s_{n}^{-}$ are symmetric with respect to the imaginary axis, it means, $s_n^{-}=-s_n^{+}$, for all $n\geq 0$. 
	
	\item[$\ast$] For each $n\in\mathbb{N}_{0}$, the edges $\gamma_{n}^{+}$ and $\gamma_{n}^{-}$ of $\mathcal{P}$ are half-circles having the same radius and endpoints $s_{n-1}^{+}$ and $s_{n}^{+}$; and $s_{n-1}^{-}$ and $s_{n}^{-}$, respectively. Thus, the intersection of any two of those edges is either empty or they meet at the same point in the real line. Moreover, if we choose  $\gamma$ one of these half-circles, then the other half-circles are in the exterior of $\gamma$.
\end{itemize}
\begin{figure}[h!]
\begin{center}	
	\begin{tikzpicture}[baseline=(current bounding box.north)]
	\begin{scope}[scale=0.9]
	\clip (-6,-0.8) rectangle (8,3.5);
	\draw [red, line width=1pt] (0.5,0) arc(0:180:0.5); 
	\draw [red, line width=1pt] (1.5,0) arc(0:180:0.5);
	\draw [red, line width=1pt] (3.5,0) arc(0:180:1);
	\draw [red, line width=1pt] (-0.5,0) arc(0:180:1);
	\draw [red, line width=1pt] (4.5,0) arc(0:180:0.5);
	\draw [red, line width=1pt] (-2.5,0) arc(0:180:0.5);
	\draw [dashed, blue, line width=1pt] (1,0.5) arc(20:160:0.5);
	\draw [dashed, blue, line width=1pt] (2.6,1) arc(20:160:2.2);
	\draw [dashed, blue, line width=1pt] (4.1,0.5) arc(20:160:3.8);
	\node at (1,0.5)  {{\color{red}\tiny{$<$}}}; 
	\node at (2.5,1) {{\color{red}\tiny{$<$}}};
	\node at (4,0.5) {{\color{red}\tiny{$<$}}};
	\node at (0,0.5)  {{\color{red}\tiny{$>$}}}; 
	\node at (-1.5,1)  {{\color{red}\tiny{$>$}}}; 
	\node at (-3,0.5)  {{\color{red}\tiny{$>$}}}; 
	\node at (-4,0.2) {$\ldots$};
	\node at (5,0.2) {$\ldots$};
	\node at (1.2,0.8) {{\small{\color{red}$\gamma_{0}^{+}$}}};
	\node at (2.8,1.3) {{\small{\color{red}$\gamma_{1}^{+}$}}};
	\node at (4.3,0.8) {{\small{\color{red}$\gamma_{2}^{+}$}}};
	\node at (-0.2,0.8) {{\small{\color{red}$\gamma_{0}^{-}$}}};
	\node at (-1.7, 1.3) {\small{\color{red}{$\gamma_{1}^{-}$}}};
	\node at (-3.3,0.8) {\small{\color{red}{$\gamma_{2}^{-}$}}};
	\node at (0.5,1.1){\small{\color{blue}{$\alpha_{0}$}}};
	\node at (0.5,2.65){\small{\color{blue}{$\alpha_{1}$}}};
	\node at (0.5,3.2){\small{\color{blue}{$\alpha_{2}$}}};
	\node at (0.5,-0.3) {\tiny{$\mathbf{0}$}};
	\node at (1.5,-0.3) {\tiny{$s_{0}^{+}$}};
	\node at (3.5,-0.3) {{\tiny$s_{1}^{+}$}};
	\node at (4.5,-0.3) {\tiny{$s_{2}^{+}$}};
	\node at (-0.5,-0.3) {\tiny{$s_{0}^{-}$}};
	\node at (-2.5,-0.3) {\tiny{$s_{1}^{-}$}};
	\node at (-3.5,-0.3) {\tiny{$s_{2}^{-}$}};
	\draw [->](-4,0) -- (5,0);
	\draw[green] (0.5,0) -- (0.5,3.5);
	\node at (0.7, 1.7){\small{\color{green}{$\beta$}}};
	\end{scope}
	\end{tikzpicture} 
	\caption{\emph{Realization of the ideal hyperbolic polygon $\mathcal{P}$ in $\mathbb{H}^2$.}}
	\label{Fig:RealizedPolygonP}
\end{center}	
\end{figure}
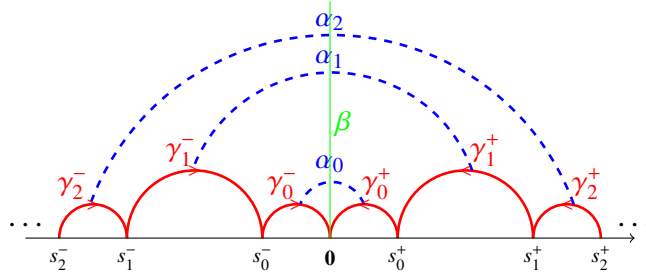

To the zero-twist flute surface $S$ we associated the sequence $\mathbf{x}=(x_{n})_{n\in\mathbb{N}_{0}}$ of positive real numbers that satisfies \begin{equation}
s_{n}^{+}=\sum\limits_{i=0}^{n}x_{i}.
\end{equation}

Thus, to the sequence $\textbf{x}$ we associated the group $\Gamma_{\mathbf{x}}$ given by 
\begin{equation}
\Gamma_{\textbf{x}}=\langle g_{n}: n\in\mathbb{N}_0 \rangle.
\end{equation}

Now we prove that $\Gamma_{\mathbf{x}}$ satisfies the properties described in Theorem \ref{Teo:Parametrization-ZTFS}. 

\noindent For each $n\in\mathbb{N}_{0}$, let $s_n:=s_n^+$.  On the hyperbolic plane $\mathbb{H}^{2}$, we draw the half-circles: $\gamma_{0}^{+}$ and $\gamma_{0}^{-}$ having endpoints $\textbf{0}$ and $s_{0}$, and $\textbf{0}$ and $-s_{0}$, respectively; For each $n\geq 1$, $\gamma_{n}^{+}$ and $\gamma_{n}^{-}$ having endpoints $s_{n-1}$ and $s_{n}$, and $-s_{n-1}$ and $-s_{n}$, respectively. See Figure \ref{Fig:RealizedPolygonP}. We remark that $\gamma_{n}^{+}$ and $\gamma_{n}^{-}$ are symmetric with respect to the imaginary axis. Then the element $g_{n}$ of ${\rm PSL}(2,\mathbb{R})$ is given by
\begin{equation}\label{Ecu:General}
g_n(z):=\dfrac{\left(1+\dfrac{2s_{n-1}}{s_n-s_{n-1}}\right)z-2s_{n-1}\left(1+\dfrac{s_{n-1}}{s_n-s_{n-1}}\right)}{-\dfrac{2}{s_n-s_{n-1}}z+\left(1+\dfrac{2s_{n-1}}{s_n-s_{n-1}}\right)},
\end{equation}
which sends $\gamma_{n}^{+}$ onto $\gamma_{n}^{-}$, for each $n\geq 0$. Observe that $g_{0}$ is parabolic and, for each $n\geq 1$, $g_{n}$ is hyperbolic with trace equal to 
\[
\left\vert 2\left(1+\frac{2s_{n-1}}{s_n-s_{n-1}}\right)\right\vert>2.
\]
As $\Gamma_{\textbf{x}}$ is obtained by side-pairing the sides of the convex ideal polygon $\mathcal{P}$, then it is a non elementary group and its set of generators is composed by non elliptic elements. Thus, $\Gamma_{\textbf{x}}$ is a Fuchsian group (see \cite{Bear1}*{Theorem 8.3.1, p. 198}), with fundamental domain $\mathcal{P}$ for its action on $\mathbb{H}^2$. In what follows we describe $\mathcal{P}$ more precisely.  

Recall that if $\gamma$ is a half-circle in $\mathbb{H}^{2}$ having as center the point $z\in\mathbb{R}$ and radius $r>0$, then $\mathbb{H}^{2} \setminus \gamma$ has two connected components. The connected component of $\mathbb{H}^2\setminus \gamma$  equal to the set $\{w\in\mathbb{H}^{2}:\vert z-w\vert>r\}$ is called the \emph{exterior of $\gamma$} and it is denoted by ${\rm Ext}(\gamma)$. The complement of the closure in $\mathbb{H}^2$ of $\mathrm{Ext}(\gamma)$ is called the \emph{interior of $\gamma$} and is denoted  by ${\rm Int}(\gamma)$, see Figure \ref{Fig:int_ext}.
 \begin{figure}[h!]
 	\begin{center}	
 		\begin{tikzpicture}[baseline=(current bounding box.north)]
 		\begin{scope}[scale=0.8]
 		\clip (-3,-0.8) rectangle (3.7,2.9);
 		\draw [blue, line width=1pt] (2,0) arc(0:180:2);
 		\node at (-2,-0.3) {\tiny{$z-r$}};
 		\node at (0,-0.3) {\tiny{$z$}};
 		\node at (2.1,-0.3) {\tiny{$z+r$}};
 		\node at (0,2.5) {\small{${\rm{Ext}}(\gamma)$}};
 		\draw [->, >=latex, black!30](-3,0) -- (3,0);
 		\node at (0,0.9) {\small{${\rm{Int}}(\gamma)$}};
 		\node at (-1.8,1.5) {\small{{\color{blue}$\gamma$}}};
 		\end{scope}
 		\end{tikzpicture} 
 		\caption{\emph{Exterior and interior of a half-circle $\gamma$} in $\mathbb{H}^{2}$.}
 		\label{Fig:int_ext}
 	\end{center}	
 \end{figure}
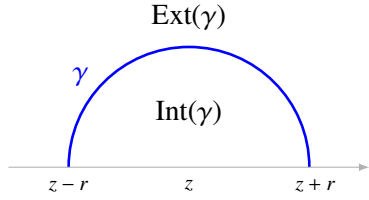

The limit $\lim\limits_{n\to \infty} s_n$ in $\mathbb{R}\cup \{\infty \}$ is well defined because the sequence  $(s_n)_{n\in\mathbb{N}_{0}}$ is the partial sums of $\mathbf{x}$. Let $a$ be such limit. If $a$ is infinity, then $\mathcal{P}$ is equal to the closure (in $\mathbb{H}^2$) of \[D:=\bigcap_{n\in \mathbb{N}_{0}} \left({\rm{Ext}}(\gamma_n^+)\cap {\rm{Ext}}(\gamma_n^-)\right)\subset \mathbb{H}^2.\] Otherwise, if $a$ is a positive real number, then we denote by $\gamma$ the half-circle in $\mathbb{H}^2$ having $a$ and $-a$ as ends points. In this case $\mathcal{P}$ is given by 
\[\overline{D}\cap {\rm{Int}}(\gamma) \subset \mathbb{H}^2.\] 
     	
Notice that $\overline{D}$ is a fundamental domain for $\Gamma_{\mathbf{x}}$ and $\overline{D}/\Gamma_{\mathbf{x}}$ is a complete hyperbolic surface. Therefore, if $a$ is infinity then $\mathcal{P}/\Gamma_{\mathbf{x}}=\overline{D}/\Gamma_{\mathbf{x}}$ is isometric to $S$ and $\Gamma_{\mathbf{x}}$ is a Fuchsian group of the first kind. Otherwise, if $a$ is finite then $\mathcal{P}/\Gamma_{\mathbf{x}}$ is isometric to $S$ and it is not a complete hyperbolic surface because the projection points of $\gamma$ are limit points which are not in the surface. However, $\mathcal{P}/\Gamma_{\mathbf{x}}$ is the convex core of $\overline{D}/\Gamma_{\mathbf{x}}$. Here finished the proof of Theorem \ref{Teo:Parametrization-ZTFS}.\qed



\section{Proof of Theorem \ref{t:LNM}}\label{section:Loch_ness_monster}\label{sec:proof-LNM}
From the introduction we recall that $\mathcal{N}$ denotes the set of all sequences $\mathbf{y}:=(y_n)_{n\in \mathbb{Z}}$ of elements $y_n=(a_{n},b_{n},c_{n},d_{n},e_{n}) \in \mathbb{R}^5$ satisfying 

\begin{equation}\label{Eq:Condition}
    a_{n}<b_{n}<c_{n}<d_{n}<e_{n} \mbox{ and } e_{n}\leq a_{n+1}
\end{equation}

For each $n\in \mathbb{Z}$, let $f_{n}$ and $g_{n}$ be elements of ${\rm PSL}(2,\mathbb{R})$ mapping $\sigma_{n}$ onto $\tilde{\sigma}_{n}$ and $\rho_{n}$ onto $\tilde{\rho}_{n}$, respectively, where $\sigma_{n}$, $\rho_{n}$, $\tilde{\sigma}_{n}$ and $\tilde{\rho}_{n}$ are the half-circles in the hyperbolic plane $\mathbb{H}^2$ depicted in Figure \ref{Fig:half_circleIntro}.  Let 
\begin{equation}
    G_\mathbf{y}:= \langle f_n,g_n:\, n\in \mathbb{Z} \rangle \leq {\rm PSL}(2,\mathbb{R}).
\end{equation}

To start with the proof of Theorem \ref{t:LNM}, the following lemma is required.

\begin{lemma}\label{lemma:hyperbolic_moebius_map}
Let $\sigma$ and $\tilde{\sigma}$ denote the half-circles having endpoints $a$ and $b$; $c$ and $d$ respectively, where $a<b<c<d$ (see Figure \ref{Fig:half_circles_1}). Then the element $f$ of ${\rm PSL}(2,\mathbb{R})$ which sends the half-circle $\sigma$ onto the half-circle $\tilde{\sigma}$ is hyperbolic.
\end{lemma}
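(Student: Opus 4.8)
The plan is to prove that $f$ is hyperbolic by producing two distinct fixed points of $f$ in the compactified plane $\overline{\mathbb{H}^2}=\mathbb{H}^2\cup\widehat{\mathbb{R}}$; this suffices because a non-identity element of $\mathrm{PSL}(2,\mathbb{R})$ has exactly one fixed point in $\overline{\mathbb{H}^2}$ when it is elliptic (an interior point) or parabolic (a boundary point), and has two (both on $\partial\mathbb{H}^2$) precisely when it is hyperbolic. The geometric input I would use is that the relevant $f$ is the side-pairing built into $G_{\mathbf{y}}$, namely the one carrying $\mathrm{Ext}(\sigma)$ onto $\mathrm{Int}(\tilde\sigma)$ (equivalently $\mathrm{Int}(\sigma)$ onto $\mathrm{Ext}(\tilde\sigma)$), where $\mathrm{Int}$ and $\mathrm{Ext}$ are as in Section~\ref{sec:proof-ZTFS}. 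Since $a<b<c<d$, the closed half-disks $\overline{\mathrm{Int}(\sigma)}$ (lying over $[a,b]$) and $\overline{\mathrm{Int}(\tilde\sigma)}$ (lying over $[c,d]$) are disjoint; in fact every point of $\mathrm{Int}(\tilde\sigma)$ has real part in $(c,d)$ and hence lies in $\mathrm{Ext}(\sigma)$, so $\overline{\mathrm{Int}(\tilde\sigma)}\subset \overline{\mathrm{Ext}(\sigma)}$.

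First I would regard $f$ as a homeomorphism of the closed disk $\overline{\mathbb{H}^2}$, using that every element of $\mathrm{PSL}(2,\mathbb{R})$ extends continuously to this compactification. The set $\overline{\mathrm{Ext}(\sigma)}$ is a compact topological disk, bounded by the geodesic $\sigma$ together with the boundary arc $\widehat{\mathbb{R}}\setminus(a,b)$, and $f$ carries it homeomorphically onto $\overline{\mathrm{Int}(\tilde\sigma)}\subset\overline{\mathrm{Ext}(\sigma)}$. Thus $f$ maps this closed disk into itself, and Brouwer's fixed point theorem yields a fixed point $p_{+}$ of $f$; since $p_{+}=f(p_{+})\in f(\overline{\mathrm{Ext}(\sigma)})=\overline{\mathrm{Int}(\tilde\sigma)}$, we get $p_{+}\in\overline{\mathrm{Int}(\tilde\sigma)}$.

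Next I would run the same argument for $f^{-1}$. From $f(\mathrm{Ext}(\sigma))=\mathrm{Int}(\tilde\sigma)$ and $f(\sigma)=\tilde\sigma$ it follows that $f(\mathrm{Int}(\sigma))=\mathrm{Ext}(\tilde\sigma)$, so $f^{-1}$ carries $\overline{\mathrm{Ext}(\tilde\sigma)}$ into $\overline{\mathrm{Int}(\sigma)}$, and Brouwer produces a fixed point $p_{-}$ of $f^{-1}$ (hence of $f$) with $p_{-}\in\overline{\mathrm{Int}(\sigma)}$. Because $b<c$, the closed regions $\overline{\mathrm{Int}(\sigma)}$ and $\overline{\mathrm{Int}(\tilde\sigma)}$ are disjoint, whence $p_{+}\neq p_{-}$. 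Since $f\neq \mathrm{id}$ (it moves $\mathrm{Ext}(\sigma)$ off itself) and has two distinct fixed points in $\overline{\mathbb{H}^2}$, it must be hyperbolic, with $p_{\pm}\in\partial\mathbb{H}^2$ its attracting and repelling fixed points.

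The step I expect to require the most care is fixing what "the element sending $\sigma$ onto $\tilde\sigma$" means: there is a one-parameter family of isometries taking the geodesic $\sigma$ to the geodesic $\tilde\sigma$, and not all of them are hyperbolic, so the statement is only well posed once one specifies that $f$ is the side-pairing carrying the exterior of $\sigma$ to the interior of $\tilde\sigma$. Once this is pinned down, the strict inequality $b<c$ does all the work, and in particular it rules out the borderline parabolic case automatically, since the two fixed points produced above are genuinely distinct. As an alternative and more computational route, paralleling the trace calculation carried out for the maps $g_{n}$ in Section~\ref{sec:proof-ZTFS}, one could fix the endpoint correspondence $a\mapsto c$, $b\mapsto d$, write the side-pairing $f$ as an explicit matrix in $\mathrm{PSL}(2,\mathbb{R})$, and verify directly that $|\mathrm{tr}\,f|>2$ using $a<b<c<d$.
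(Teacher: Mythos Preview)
Your argument is correct and is a genuinely different route from the paper's. The paper proceeds computationally: it writes down the specific side-pairing $f(z)=-\dfrac{r\tilde r}{z-O}+\tilde O$ (with $O,\tilde O$ the centres and $r,\tilde r$ the radii), forms the quadratic fixed-point equation, and checks that its discriminant $\delta=(O-\tilde O)^2-4r\tilde r$ is positive via the inequality $r+\tilde r<|\tilde O-O|$, which is exactly the disjointness $b<c$. Your approach replaces this algebra by a soft topological argument: Brouwer applied to $f$ on $\overline{\mathrm{Ext}(\sigma)}$ and to $f^{-1}$ on $\overline{\mathrm{Ext}(\tilde\sigma)}$ produces two fixed points trapped in the disjoint closed half-disks over $[c,d]$ and $[a,b]$, hence distinct, hence $f$ is hyperbolic. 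The paper's route has the advantage of pinning down $f$ by an explicit formula (thereby dissolving the ambiguity you rightly flag in the phrase ``the element sending $\sigma$ onto $\tilde\sigma$''), and it yields the fixed points and the trace concretely; your route is cleaner, makes the location of the attracting and repelling fixed points transparent, and would carry over unchanged to any Schottky-type side-pairing with disjoint isometric circles. Your closing remark that one could instead compute $|\mathrm{tr}\,f|>2$ is precisely the spirit of the paper's proof.
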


\begin{proof}
	By hypothesis, the half-circles $\sigma$ and $\tilde{\sigma}$ are disjoint, and each one of the half-circles is contained in the exterior of the other half-circle.
	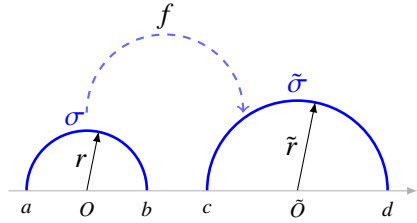
\begin{figure}[h!]
		\begin{center}	
			\begin{tikzpicture}[baseline=(current bounding box.north)]
			\begin{scope}[scale=0.8]
			\clip (-3.3,-0.5) rectangle (3.6,3.3);
			\draw [blue, line width=1pt] (-1,0) arc(0:180:1);
			\draw [blue, line width=1pt] (3,0) arc(0:180:1.5);
			\draw[dashed, color=blue!60, thick, <-] (0.6,1.3) arc (0:180:1.3);
			\node at (-3,-0.3) {\tiny{$a$}};
			\node at (-1,-0.3) {\tiny{$b$}};
			\node at (0,-0.3) {\tiny{$c$}};
			\node at (3,-0.3) {\tiny{$d$}};
			\node at (-2,-0.3) {\tiny{$O$}};
			\node at (-2.1,0.5) {\small{$r$}};
			\draw [->, >=latex](-2,0) -- (-1.8,0.99);
			\node at (1.5,-0.3) {\tiny{$\tilde{O}$}};
			\node at (1.4,0.75) {\small{$\tilde{r}$}};
			\draw [->, >=latex](1.5,0) -- (1.8,1.48);
			\node at (-2.2,1.2) {\small{{\color{blue}$\sigma$}}};
			\node at (1.5,1.8) {\small{{\color{blue}$\tilde{\sigma}$}}};
			\node at (-0.7,2.9) {\small{$f$}};
			\draw [->, >=latex, black!30](-3.3,0) -- (3.5,0);
			\end{scope}
			\end{tikzpicture} 
			\caption{\emph{Half-circles $\sigma$ and $\tilde{\sigma}$.}}
			\label{Fig:half_circles_1}
		\end{center}	
	\end{figure}
	Explicitly, they have centers at the real numbers $O=a+\frac{b-a}{2}$ and $\tilde{O}=c+\frac{d-c}{2}$ respectively, and radius $r=\frac{b-a}{2}$ and $\tilde{r}=\frac{d-c}{2}$, respectively. Then the element $f$ of ${\rm PSL}(2,\mathbb{R})$ is given by  
	\begin{equation}\label{eq:hyperbolic_moebius_map}
	f(z)=\frac{-r\tilde{r}}{z-O} + \tilde{O}.
	\end{equation}
	Let us note that the fixed points of  $f$ are 
	\begin{equation}\label{eq:fix_points}
	z=\frac{-(r+\tilde{r})\pm \sqrt{\delta}}{2},
	\end{equation}
	where $\delta=(O+\tilde{O})^2-4(O\tilde{O}+r\tilde{r})=(O-\tilde{O})^2-4r\tilde{r}$. Since $0<  r+\tilde{r} < \vert \tilde{O}-O \vert$, then 
	$$0\leq (r-\tilde{r})^2= (r+\tilde{r})^2-4r\tilde{r} <(\tilde{O}-O)^2-4r\tilde{r}=\delta.$$
	It implies that $\sqrt{\delta}$ is a real number. Therefore, $f$ is hyperbolic.
\end{proof}

\medskip

\noindent \emph{The group $G_{\mathbf{y}}$  is Fuchsian}. By Lemma \ref{lemma:hyperbolic_moebius_map}, $G_{\mathbf{y}}$ is generated by hyperbolic elements. So, by Theorem 8.2.1 in \cite{Bear1}, $G_{\mathbf{y}}$ is a Fuchsian group. By construction, the collection of half-circles  $\mathcal{C}=\{\sigma_{n}, \tilde{\sigma}_{n}, \rho_{n} ,\tilde{\rho}_{n}:n\in\mathbb{Z}\}$ are pairwise disjoint, and each one of the half-circles in $\mathcal{C}$ is contained in the exterior of each one of the other half-circles in $\mathcal{C}$. From Theorem 3.3.5 in \cite{KS}, a fundamental region $D(G_{\mathbf{y}})$ for the Fuchsian group $G_{\mathbf{y}}$ is given by 

\begin{equation}\label{eq:fundamental_domain_LNM}
D(G_{\mathbf{y}})=\bigcap\limits_{n\in\mathbb{Z}}\left({\rm Ext}(\sigma_{n})\cap {\rm Ext}(\tilde{\sigma}_{n})\cap {\rm Ext}(\rho_{n})\cap {\rm Ext}(\tilde{\rho}_{n})\right)\subset \mathbb{H}^{2}.
\end{equation} 

Given that the intersection of any two different elements belonged to $\mathcal{C}$ is either empty or at infinity, the last means, they meet at the same point in the real line, then $G_{\mathbf{y}}$ acts freely and properly discontinuously on whole $\mathbb{H}^{2}$, see \cite{MB}*{p. 29}. Hence, the quotient space $S:=\mathbb{H}^{2}/G_{\mathbf{y}}$ is a complete Riemann surface.\qed 

\medskip

\noindent In order to prove that $S$ is topologically equivalent to the Loch Ness monster, we must verify that $S$ has only one end and infinite genus. The proof uses the same ideas appearing in \cite{AyC}. 

\medskip 

\noindent \emph{The surface $S$ has only one end}. By Lemma \ref{lemma:spec} is enough to prove that for any compact subset $K$ of $S$, there exists a compact subset $K'$ of $S$ such that $K\subset K'$ and the space $S\setminus K'$ is connected.     	
	
Let $K$ be a compact subset of $S$. Observe that $\tilde{K}:=\pi^{-1}(K)$ is a compact subset of $\overline{D(G_{\mathbf{y}})}$ where $\pi:\overline{D(G_{\mathbf{y}})}\rightarrow S= \overline{D(G_{\mathbf{y}})}/G_{\mathbf{y}}$ is the quotient map. Thus, $\tilde{K}$ is a compact subset of $\mathbb{H}^2$. Therefore, there exist closed intervals $I_x$ and $I_y$  in the real and imaginary axes, respectively, such that, $\pi_x(\tilde{K}) \subset I_x$ and $\pi_y(\tilde{K}) \subset I_y$, where $\pi_{x}:\mathbb{H}^{2}\to\mathbb{R}$ is the standard projection map on the real axis and $\pi_{y}:\mathbb{H}^{2}\to (0,+\infty)$ is the projection map on the imaginary axis. By construction of the fundamental domain for $G_{\mathbf{y}}$ we have that $\tilde{K}':=\overline{D(\Gamma)}\cap I_{x}\times I_{y}$ is a compact subset of $\overline{D(G_{\mathbf{y}})}$ such that $\tilde{K}\subset \tilde{K}'$. Thus, $K':=\pi(\tilde{K}')$ is a compact subset of $S$ which contains $K$. Finally, we can verify that $S\smallsetminus K'$ is connected.

\noindent \emph{The surface $S$ has infinite genus}. For each $n\in\mathbb{Z}$, we define the strip \[ \tilde{S}_{n}:=D(G_{\mathbf{y}})\cap \{z\in\mathbb{H}^2: a_{n}<{\rm Re}(z)<e_{n} \}\subset \mathbb{H}^2.\]

\noindent Observe that $\tilde{S}_{n}/\langle f_n,g_n \rangle$ defines, under the projection map $\pi$, a subsurface $S_{n}$ of $S$ topologically equivalent to a torus minus a disk. By construction, any for two different integers $m\neq n$, the subsurfaces $S_{n}$ and $S_{m}$ are disjoint because the strips $\tilde{S}_n \cap \tilde{S}_m =\emptyset$. This shows that $S$ has infinite genus.

\medskip

\noindent \emph{The group $G_\mathbf{y}$ is of the first kind if and only if $e_n=a_{n+1}$ for all $n\in \mathbb{Z}$, $\lim\limits_{n\to \infty} e_n=\infty$ and $\lim\limits_{n\to -\infty} a_n=-\infty$.} Suppose that $G_{\mathbf{y}}$ if of the first kind but that the conclusion is false. In any case it is possible to find an interval $I\subset \mathbb{R}$ not contained in the limit set of $G_{\mathbf{y}}$ which leads to a contradiction. Indeed, if $e_n\neq a_{n+1}$ for some $n\in \mathbb{Z}$ then $I=(e_n,a_{n+1})$ is such interval. If $\lim\limits_{n\to \infty} e_n=r<\infty$ then $I=(a,\infty)$. Finally, if $\lim\limits_{n\to -\infty} a_n=R> -\infty$, then $I=(-\infty,R)$.  

Now, suppose that $e_n=a_{n+1}$ for all $n\in \mathbb{Z}$, $\lim\limits_{n\to \infty} e_n=\infty$ and $\lim\limits_{n\to -\infty} a_n=-\infty$ but that $G_{\mathbf{y}}$ is not of the first kind. Let $x$ be a point in $\partial \mathbb{H}^2$ such that it is not in  the limit set $\Lambda(G_{\mathbf{y}})$ of $G_{\mathbf{y}}$. Then there exists an open interval $L \subset \partial \mathbb{H}^2$ with $x\in L$ and $\Lambda(G_{\mathbf{y}})\cap L =\emptyset$. Let $C$ be the half-circle in $\mathbb{H}^2$ with endpoints the endpoints of $L$. Given that $\mathbb{H}^{2}=\bigcup\limits_{g\in G_{\mathbf{y}}} g(\overline{D(G_{\textbf{y}})})$ then there exists $g\in G_{\mathbf{y}}$ such that $C\cap g(D(G_{\mathbf{y}}))\neq \emptyset$. Therefore, $g^{-1}(C)\cap D(G_{\mathbf{y}})\neq \emptyset$. By hypothesis we have that $g^{-1}(L)\cap \Lambda(G_{\mathbf{y}})\neq \emptyset$. Given that the limit set is $G_{\mathbf{y}}$-invariant, then we obtain that $L\cap \Lambda(G_{\mathbf{y}})\neq \emptyset$, it is a contradiction because we suppose that $\Lambda(G_{\mathbf{y}})\cap L =\emptyset$. \qed

\vspace{4mm}

From the point of view of parabolicity the following questions appears naturally: Which points $\mathbf{y}$ of $\mathcal{N}$ define Fuchsian groups uniformizing Loch Ness monsters of parabolic type?


\section*{Acknowledgements}

We thank Jos\'e Hern\'andez Santiago, Rub\'en Hidalgo and Jes\'us Muci\~no for his effort to review our paper and for the provided us with great suggestions and comments.

I. Morales was partially supported by \emph{``Proyecto CONACyT Ciencia de Frontera 217392 cerrando brechas y extendiendo puentes en Geometr\'ia y Topolog\'ia''}. 

Camilo Ram\'irez Maluendas expresses his gratitude to Universidad Nacional de Colombia sede Manizales, and to the research program \emph{``Reconstrucci\'on del tejido social en zonas posconflicto en Colombia''} SIGP code: 57579 with the project entitled: \emph{``Competencias empresariales y de innovaci\'on para el desarrollo econ\'omico y la inclusi\'on productiva de las regiones afectadas por el conflicto colombiano''} SIGP code 58907. Contract number: FP44842-213-2018. He dedicates this work to his beautiful family, Marbella and Emilio, in gratitude for their love and support.



\end{document}